\theoremstyle{plain}
\newtheorem{theorem}{Theorem}
\newtheorem{sled}{Corollary}
\newtheorem{lemma}{Lemma}
\newtheorem{propos}{Proposition}
\theoremstyle{definition}
\newtheorem{remark}{Remark}
\newtheorem{example}{Example}
\title[On the automorphism group of an affine toric variety]{On the connectedness of the automorphism group\\of an affine toric variety}
\author{Veronika Kikteva}
\address{HSE University, Faculty of Computer Science, 11 Pokrovsky Bulvar, Moscow, 109028, Russia}
\email{VVKikteva@yandex.ru}
\subjclass{Primary 14J50, 14M25; Secondary 14L30, 14R20}
\keywords{Automorphism group, toric variety, divisor class group, Cox ring.}
\thanks{This research was supported by the Ministry of Science and Higher Education of the Russian Federation, agreement 075-15-2022-289 date 06/04/2022.}
\thanks{This is a preprint of the Work accepted for publication in Sbornik: Mathematics, 2024, Vol. 215, No 10. The owner of the distribution rights is Sbornik: Mathematics}
\begin{document}
\maketitle

\begin{abstract}
We obtain a criterion for the automorphism group of an affine toric variety to be connected in combinatorial terms and in terms of the divisor class group of the variety. The component group of the automorphism group of a non-degenerate affine toric variety is described. In particular, we show that the number of connected components of the automorphism group is finite.
\end{abstract}

\section{Introduction}
Let $\mathbb{K}$ be an algebraically closed field of characteristic zero and $X$ be an algebraic variety over the field $\mathbb{K}$. By $\mathrm{Aut}(X)$ we denote the group of regular automorphisms of the variety~$X$. In general, the group $\mathrm{Aut}(X)$ is not an algebraic group. However, for subgroups in the group $\mathrm{Aut}(X)$ the connectedness can be defined. This concept was introduced in~\cite{Ram}, see also~\cite{Popov}. Let $S$ be an irreducible affine algebraic variety. Then any map $S \to \mathrm{Aut}(X), s \mapsto \phi_s$ defines a \textit{family} $\{\phi_s\}_{s \in S}$ in the automorphism group of $X$, parameterized by the variety $S$. A family is called \textit{algebraic} if the map $S \times X \to X$, given by the rule $(s, x) \mapsto \phi_s(x)$, is a morphism of algebraic varieties. Let $G$ be a subgroup in $\mathrm{Aut}(X)$. If for every element $g \in G$ there exists an algebraic family $\{\phi_s\}_{s \in S}$ that contains $g$ and the identity automorphism, then $G$ is called a \textit{connected subgroup} in $\mathrm{Aut}(X)$. The \textit{neutral component} $\mathrm{Aut}(X)^0$ of the automorphism group of $X$ is the subgroup generated by elements of all algebraic families containing the identity automorphism. 

It follows from~\cite{IVSA10} that the automorphism group of a non-degenerate affine toric variety of dimension two or greater is infinite-dimensional and, therefore, it is not an algebraic group. In contrast to the affine case, the automorphism group of a complete toric variety is an affine algebraic group. Automorphism groups of complete simplicial toric varieties have been studied in~\cite{Cox95, De}. Note that~\cite[Corollary~4.7]{Cox95} contains a description of the neutral component and the component group of the automorphism groups for complete simplicial toric varieties.

There are examples of affine toric varieties with connected and not connected automorphism groups. In~\cite[Lemma 4]{Shafarevich} and~\cite[Theorem 6]{Popov}, it is shown that the automorphism group of the $ n $-dimensional affine space is connected for any positive integer number $ n $, i.e., $ \mathrm{Aut}(\mathbb{A}^n) = \mathrm{Aut}(\mathbb{A}^n)^0 $. An example of an affine toric variety with a not connected automorphism group is the algebraic torus $ T = (\mathbb{K}^{\times})^n $. It is well known that the automorphism group of the torus $ T $ is isomorphic to $ \mathrm{GL}_n(\mathbb{Z}) \rightthreetimes (\mathbb{K}^{\times})^n $ and is not connected. These considerations lead naturally to the question whether the automorphism group of an affine toric variety is connected.

The aim of this paper is to prove a criterion for the automorphism group of an affine toric variety to be connected. It is shown that the automorphism group of a degenerate affine toric variety is not connected, and the automorphism group of a non-degenerate affine toric variety is connected if and only if there are no non-trivial automorphisms of the divisor class group such that they permute the classes of prime divisors that are invariant under the action of the acting torus.

Necessary definitions are given in Section~\ref{sectprelim}. The criterion for the automorphism group of an affine toric variety to be connected is proved in Section~\ref{sectcrit}, see Theorem~\ref{crit} and Corollary~\ref{sledcompgroup}. Section~\ref{sectcompgroup} describes the component group of the automorphism group of a non-degenerate affine toric variety. It is shown that the component group is finite. It is remarkable that the description of the component group of the automorphism group is analogous to the description of the component group in the case of complete simplicial toric varieties. Section~\ref{sectsurf} contains the application of the connectedness criterion to the case of toric surfaces, see Proposition~\ref{critsurf}. Section~\ref{sectex} is dedicated to examples illustrating the obtained results.

The author is grateful to Ivan Arzhantsev and to Sergey Gaifullin for constant attention to this work. The author is a winner of the Competition of mathematical projects carried out by young researchers, performed by Leonhard Euler International Mathematical Institute in Saint Petersburg, and would like to thank its sponsors and jury.

\section{Preliminaries} \label{sectprelim}
\subsection{Toric varieties}
\label{subsecttor}

Let us recall some facts on toric varieties. More detailed information and proofs can be found in~\cite{CLS, Fulton}. A normal irreducible algebraic variety $ X $ is called \textit{toric} if it contains an algebraic torus $ T = (\mathbb{K}^{\times})^n $ as a dense open subset in the Zariski topology, and the action of the torus on itself can be extended to a regular action of $T$ on the entire variety $ X $.

Let $ X $ be an affine toric variety with an acting torus $ T $. By $ N $ we denote the lattice of one-parameter subgroups $ \lambda: \mathbb{K}^{\times} \to T $, and let $ M = \mathrm{Hom}(N, \mathbb{Z}) $ be the dual lattice. We associate the lattice $ M $ with the lattice of characters $ \chi: T \to \mathbb{K}^{\times} $, where the pairing $ N \times M \to \mathbb{Z} $ is given by the rule $$ (\lambda, \chi) \to \langle\lambda, \chi\rangle,\text{ where }c^{\langle\lambda, \chi\rangle} = \chi(\lambda(c))\text{  for  }c \in \mathbb{K}^{\times}. $$

Recall the correspondence between affine toric varieties and rational polyhedral cones. Consider a polyhedral cone $ \sigma $ in the rational vector space $ N_{\mathbb{Q}} = N \otimes_{\mathbb{Z}} \mathbb{Q} $. The dual cone $ \sigma^{\vee} $ in the space $ M_{\mathbb{Q}} = M \otimes_{\mathbb{Z}} \mathbb{Q} $ is defined as $$ \sigma^{\vee} = \{ m \in M_{\mathbb{Q}} \mid \langle u, m \rangle \geq 0 \ \forall u \in \sigma \}. $$ The variety $ X_{\sigma} = \mathrm{Spec}(\mathbb{K}[\sigma^{\vee} \cap M]) $ is toric and any affine toric variety can be constructed in such way. $ T $-orbits on the variety $ X_\sigma $ correspond to faces of the cone~$ \sigma $. In particular, to each ray of the cone $ \sigma $, one can associate a prime $ T $-invariant divisor on the variety~$ X_\sigma $, which is the closure of the corresponding $ T $-orbit. A vector is called \textit{primitive} if it is the shortest integer vector on its ray. If the cone~$ \sigma $ contains~$ r $ rays with primitive vectors $ v_1, \dots, v_r $, then the corresponding prime $ T $-invariant divisors are denoted by $ D_1, \dots, D_r $.

A toric variety is called \textit{non-degenerate} if it has only constant invertible regular functions. A toric variety $ X $ is non-degenerate if and only if it cannot be represented as a direct product of some toric variety and an algebraic torus. This condition is equivalent to the cone $ \sigma $, corresponding to the variety $ X $, being full-dimensional. In Sections~\ref{subsectCl} and~\ref{subsectR} we assume~$ X $ to be a non-degenerate affine toric variety corresponding to the cone $ \sigma $.

\subsection{A divisor class group} \label{subsectCl}
Denote by $ \mathrm{WDiv}(X) $ the group of Weil divisors on a normal algebraic variety $ X $, and by $ \mathrm{PDiv}(X) $ the subgroup of principal divisors, i.e., $$ \mathrm{PDiv}(X) = \{ \mathrm{div}(f) \mid f \in \mathbb{K}(X)^{\times} \} ,$$ where $ \mathrm{div}(f) $ denotes the divisor of the rational function $ f $. The \textit{divisor class group} $ \mathrm{Cl}(X) $ of the variety $ X $ is defined as the quotient group of the group of Weil divisors by the subgroup of principal divisors:
$$ \mathrm{Cl}(X) = \mathrm{WDiv}(X) / \mathrm{PDiv}(X), $$
see~\cite[Chapter III, \S 1]{Algeom}.

Denote by $ \mathrm{WDiv}_{T}(X) $ the subgroup of Weil divisors invariant under the action of $ T $. It is freely generated by the $ T $-invariant prime divisors $ D_1, \dots, D_r $. For each element $ m \in M $, denote by $ \chi^m $ the corresponding character of the torus. According to~\cite[Theorem~4.1.3]{CLS}, there exists an exact sequence $$ M \to \mathrm{WDiv}_{T}(X) \to \mathrm{Cl}(X) \to 0, $$ where the first map is given by the rule $ m \mapsto \mathrm{div}(\chi^m) $, and the second map sends the $ T $-invariant divisor $ D $ to its class $ [D] \in \mathrm{Cl}(X) $. By~\cite[Proposition~4.1.2]{CLS}, we have $$ \mathrm{div}(\chi^m) = \sum_{i=1}^r \langle v_i, m \rangle D_i $$ under the notation of Section~\ref{subsecttor}. Thus, $$ \mathrm{Cl}(X) \simeq \langle D_1, \dots, D_r \rangle / \langle \mathrm{div}(\chi^{e_j}) \mid j=1, \dots, n \rangle=$$ $$= \langle D_1, \dots, D_r \rangle / \Bigl\langle \sum_{i=1}^r v_{ij}D_i \Big| j=1, \dots, n \Bigr\rangle ,$$ where $ v_{i1}, \dots, v_{ij} $ are the coordinates of the vector $ v_i $ in the basis of the lattice $ N $, dual to the basis $ e_1, \dots, e_n $ of the lattice $ M $. In particular, it follows that the class group of an affine toric variety is finitely generated.

Automorphisms from $ \mathrm{Aut}(X) $ act naturally on the set of prime divisors and, consequently, on the group of Weil divisors. Principal divisors are mapped to principal ones under this action, and thus, we have an action of the group $ \mathrm{Aut}(X) $ on the class group $ \mathrm{Cl}(X) $. Therefore, there exists a homomorphism $$ \widetilde{\alpha}: \mathrm{Aut}(X) \to \mathrm{Aut}(\mathrm{Cl}(X)) .$$
Given an automorphism $ \phi \in \mathrm{Aut}(X) $, the automorphism $ \widetilde{\alpha}(\phi) \in \mathrm{Aut}(\mathrm{Cl}(X)) $ acts as follows: $$ \widetilde{\alpha}(\phi): [D] \mapsto [\phi(D)] .$$
Define an antihomomorphism $$ \alpha: \mathrm{Aut}(X) \to \mathrm{Aut}(\mathrm{Cl}(X)), \phi \mapsto \widetilde{\alpha}(\phi^{-1}) .$$ Note that
\begin{equation}
    \label{eqkerakertildea}
\mathrm{Ker}\,\alpha=\mathrm{Ker}\,\widetilde{\alpha},
\end{equation}
because $$\widetilde{\alpha}(\phi)=\mathrm{id}_{\mathrm{Cl}(X)} \Longleftrightarrow
[D]=[\phi(D)]\ \forall D\in \mathrm{WDiv}(X)
\Longleftrightarrow$$
$$\Longleftrightarrow[D]=[\phi^{-1}(D)]\ \forall D\in \mathrm{WDiv}(X)
\Longleftrightarrow
\alpha(\phi)=\mathrm{id}_{\mathrm{Cl}(X)}.
$$
Moreover, 
\begin{equation}
    \label{eqimaimtildea}
    \widetilde{\alpha}(\mathrm{Aut}(X))=\alpha(\mathrm{Aut}(X)).
\end{equation}
Indeed,
$$\xi\in \widetilde{\alpha}(\mathrm{Aut}(X)) \Longleftrightarrow 
\xi^{-1}\in \widetilde{\alpha}(\mathrm{Aut}(X)) 
\Longleftrightarrow
\xi\in \alpha(\mathrm{Aut}(X)).
$$

In~\cite[Lemma 2.2]{IVBazhov}, it is proved that for a non-degenerate affine toric variety $ X $, the component $ \mathrm{Aut}(X)^0 $ is contained in the kernel of the action of the automorphism group of~$ X $ on the class group of $ X $. In Proposition~\ref{thmAut0Kera}, we show that the equality holds: $$\mathrm{Aut}(X)^0=\mathrm{Ker}\,\widetilde{\alpha}=\mathrm{Ker}\,\alpha.$$

\subsection{Cox rings} \label{subsectR} 

Cox rings were first introduced in~\cite{Cox95}. For detailed information the reader is also referred to~\cite{ADHL}.

Recall the construction of the Cox ring for a normal algebraic variety $ X $ with only constant invertible regular functions and with finitely generated divisor class group.
For a Weil divisor $ D $ on the variety $ X $, consider the vector space
$$ L(X, D) := \{ f \in \mathbb{K}(X)^{\times} \mid \mathrm{div}(f) + D \geq 0 \} \cup \{ 0 \}. $$
For a subgroup $ K \subseteq \mathrm{WDiv}(X) $, consider the $ K $-graded $ \mathbb{K}[X] $-algebra: $$ S_K := \bigoplus_{D \in K} S_D \text{, where } S_D = L(X, D). $$ Multiplication in $ S_K $ is defined on homogeneous elements as follows. If $ f_1 \in S_{D_1} $ and $ f_2 \in S_{D_2} $, then their product in $ S_K $ is the product $ f_1f_2 $ in $ \mathbb{K}(X) $, considered as an element of $ S_{D_1 + D_2} $. For arbitrary elements of $ S_K $, multiplication is defined by distributivity.

In the group of Weil divisors, one can choose a free finitely generated subgroup~$ K $, which surjectively maps onto the divisor class group under the factoring by the subgroup of principal divisors. Consider the group homomorphism $$ \chi: K \cap \mathrm{PDiv}(X) \to \mathbb{K}(X)^{\times} ,$$ satisfying the rule $$ \mathrm{div}(\chi(E)) = E .$$ Denote by $ I $ the ideal of $ S_K $, generated by elements $ 1 - \chi(E) $ for all Weil divisors $ {E \in K \cap \mathrm{PDiv}(X) }$, where $ 1 $ is a homogeneous element of degree $ 0 $, and the element $ \chi(E) $ is homogeneous and has degree $ -E $.

The \textit{Cox ring} is defined as the quotient ring $ R(X) := S_K / I $. This ring is graded by the divisor class group of $ X $: $$ R(X) = \bigoplus_{u \in \mathrm{Cl}(X)} R(X)_u ,$$ with $ R(X)_0 = \mathbb{K}[X] $. It is known that the Cox ring of the variety $ X $ does not depend on the choice of a subgroup $ K $ and a homomorphism $ \chi $ up to an isomorphism of $ \mathrm{Cl}(X) $-graded rings.

\textit{The quasi-torus of N\'eron~--- Severi} for the variety $ X $ is a quasi-torus $ N(X) $, whose character group is isomorphic to $ \mathrm{Cl}(X) $. The quasi-torus $ N(X) $ acts on $ R(X) $ by automorphisms, and $ R(X)_u $ are weight subspaces for this action, that is, $ N $ acts on $ R(X)_u $ by multiplication by the corresponding character. Thus, under the action of elements from $ N(X) $, the $ \mathrm{Cl}(X) $-homogeneous components of $ R(X) $ are invariant.

In~\cite[Theorem 5.1]{IVSA10}, it is proved that for an irreducible normal affine variety with a finitely generated divisor class group and with only constant invertible regular functions, there exists an exact sequence
\begin{equation}
\label{exactsequence}
    1\to N(X) \to \widetilde{\mathrm{Aut}}(R(X)) \xrightarrow{\beta} \mathrm{Aut}(X) \to 1,
\end{equation}
%In~\cite{IVSA10}, it is proven that any automorphism of the variety $ X $ lifts to an automorphism of the Cox ring, normalizing the $ \mathrm{Cl}(X) $-grading. That is, there exists a surjective antihomomorphism $$ \beta: \widetilde{\mathrm{Aut}}(R(X)) \to \mathrm{Aut}(X), $$ 
where $ \widetilde{\mathrm{Aut}}(R(X)) $ denotes the set of the automorphisms of the Cox ring that normalize the $\mathrm{Cl}(X)$-grading:
$$ \widetilde{\mathrm{Aut}}(R(X)) := \{ \phi \in \mathrm{Aut}(R(X)) \mid \exists \phi_0 \in \mathrm{Aut}(\mathrm{Cl}(X)):$$ $$ \phi(R(X)_u) = R(X)_{\phi_0(u)} \ \forall u \in \mathrm{Cl}(X) \}. $$
Since automorphisms from $ \widetilde{\mathrm{Aut}}(R(X)) $ normalize the $\mathrm{Cl}(X)$-grading, the component $R(X)_0$ is an invariant subset for any $ \psi \in \widetilde{\mathrm{Aut}}(R(X)) $. Therefore, the restriction $ \psi|_{R(X)_0} $ is well defined. The antihomomorphism $ \beta $ is defined by the following rule: $ \beta(\psi) = \phi $, if $$ \psi|_{R(X)_0} = \psi|_{\mathbb{K}[X]} = \phi^*, $$ that is $$ \psi|_{R(X)_0}(f)(x) = f(\phi(x)) $$ for any $ x \in X, f \in \mathbb{K}[X] $.

By definition of the group $ \widetilde{\mathrm{Aut}}(R(X)) $, an automorphism of the Cox ring, that normalizes the $ \mathrm{Cl}(X) $-grading, can be associated with an automorphism of the class group. Thus, we have a group homomorphism $$ \gamma: \widetilde{\mathrm{Aut}}(R(X)) \to \mathrm{Aut}(\mathrm{Cl}(X)) .$$

In~\cite{Cox95} it is proved that for a non-degenerate toric variety $ X = X_\sigma $, the Cox ring is isomorphic to the polynomial ring in $ r $ variables over the field $ \mathbb{K} $, where $ r $ denotes the number of the rays of the cone $ \sigma $, $$ R(X) = \mathbb{K}[T_1, \dots, T_r], $$ where $ T_i $ are homogeneous with respect to the $ \mathrm{Cl}(X) $-grading and $ \mathrm{deg}(T_i) = [D_i] $.

\section{Criterion for the automorphism group to be connected} \label{sectcrit}

\begin{propos}
\label{proposdeg}
Let $X$ be a degenerate affine toric variety with an acting torus~$ T $. Then the automorphism group of $X$ is not connected.

\end{propos}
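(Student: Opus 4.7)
The plan is to exhibit a discrete invariant of automorphisms of $X$ that vanishes on $\mathrm{Aut}(X)^0$ but is non-trivial on some genuine automorphism; the natural candidate is the induced action on the lattice $\mathbb{K}[X]^\times/\mathbb{K}^\times$ of units modulo scalars.

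First I would use the characterisation of degeneracy recalled at the end of Section~\ref{subsecttor}: since $X$ is degenerate, $X \cong Y \times (\mathbb{K}^\times)^k$ for some toric $Y$ and some $k \geq 1$, or equivalently $\sigma^\perp \cap M \neq 0$. Combined with the decomposition $\mathbb{K}[X] = \bigoplus_{m \in \sigma^\vee \cap M} \mathbb{K}\chi^m$, the fact that $\chi^m$ is a unit exactly when $m \in \sigma^\perp \cap M$ gives $\mathbb{K}[X]^\times/\mathbb{K}^\times \cong \sigma^\perp \cap M \cong \mathbb{Z}^k$. Any $\phi \in \mathrm{Aut}(X)$ sends units to units and induces an automorphism of this lattice, yielding a homomorphism $\rho: \mathrm{Aut}(X) \to \mathrm{GL}_k(\mathbb{Z})$. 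Its image is non-trivial: the partial inversion $(y, t_1, \ldots, t_k) \mapsto (y, t_1^{-1}, t_2, \ldots, t_k)$ belongs to $\mathrm{Aut}(X)$, and its image under $\rho$ is the reflection negating the first basis vector of $\sigma^\perp \cap M$.

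The main step is to show that $\rho$ kills $\mathrm{Aut}(X)^0$. Given an algebraic family $\{\phi_s\}_{s \in S}$ with $\phi_{s_0} = \mathrm{id}$ and a lattice point $m \in \sigma^\perp \cap M$, consider the regular function $F(s,x) = \chi^m(\phi_s(x))$ on $S \times X$. On each slice $\{s\}\times X$ it is a unit in $\mathbb{K}[X]$, so $F(s,\cdot) = c(s)\,\chi^{m(s)}$ for some $c(s) \in \mathbb{K}^\times$ and $m(s) \in \sigma^\perp \cap M$. Expanding $F$ along the $M$-grading of $\mathbb{K}[X]$ as $F = \sum_{m' \in \sigma^\vee \cap M} g_{m'}\,\chi^{m'}$ with $g_{m'} \in \mathbb{K}[S]$, the slice formula forces the non-empty open loci $\{s \in S : g_{m'}(s) \neq 0\}$ to be pairwise disjoint; by irreducibility of $S$ only one of them can be non-empty, so $m(s)$ is constant, and evaluating at $s_0$ gives $m(s) \equiv m$, i.e. $\rho(\phi_s) = \mathrm{id}$ for every $s$. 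The main obstacle in the plan is precisely this locally-constant step: one must disentangle the discrete exponent $m(s)$ from the continuous scalar $c(s)$, which is exactly where the irreducibility of the parameter variety $S$ enters. Once this is done, the partial inversion of the previous paragraph lies outside $\mathrm{Aut}(X)^0$, proving that $\mathrm{Aut}(X)$ is not connected.
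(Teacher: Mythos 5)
Your proof is correct, and it takes a genuinely different route from the paper's. The paper also starts from the decomposition $X \cong Y \times \widetilde{T}$ and from the fact that units of $\mathbb{K}[X]$ are scalar multiples of Laurent monomials in the torus coordinates, but it then argues by contradiction: it shows that an algebraic family in $\mathrm{Aut}(X)$ joining a ``torus-only'' automorphism to the identity would project to an algebraic family in $\mathrm{Aut}(\widetilde{T})$ joining an arbitrary prescribed $\psi$ to the identity, and invokes the (stated as well known, not reproved) disconnectedness of $\mathrm{Aut}(\widetilde{T}) \cong \mathrm{GL}_q(\mathbb{Z}) \rightthreetimes (\mathbb{K}^{\times})^q$. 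You instead build the discrete invariant $\rho:\mathrm{Aut}(X)\to\mathrm{GL}\bigl(\mathbb{K}[X]^{\times}/\mathbb{K}^{\times}\bigr)\cong\mathrm{GL}_k(\mathbb{Z})$ and prove directly, via the $M$-grading of $\mathbb{K}[S]\otimes\mathbb{K}[X]$ and the irreducibility of $S$, that the exponent $m(s)$ is locally constant along any algebraic family, so $\rho$ vanishes on $\mathrm{Aut}(X)^0$; the partial inversion then finishes the argument. What your approach buys is self-containedness --- it effectively proves the disconnectedness of the automorphism group of a torus as a special case rather than citing it --- and it isolates a clean group-theoretic obstruction ($\rho$ factors through the component group), in the same spirit as the homomorphism $\widetilde{\alpha}$ used for the non-degenerate case later in the paper. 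The only steps you leave implicit are standard and also used implicitly by the paper: that every unit of an affine semigroup algebra is a scalar times a character, and that $\rho$ being trivial on all members of families through the identity forces it to be trivial on the subgroup they generate.
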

\begin{proof}

Since $ X $ is a degenerate affine toric variety, we have that $ X $ can be decomposed into the direct product $ X = Y \times \widetilde{T} $, and for the torus $ T $ it holds that $ T = \overline{T} \times \widetilde{T} $, where $ \overline{T} $ and $ \widetilde{T} $ are algebraic tori, and $ Y $ is a non-degenerate affine toric variety with an acting torus $ \overline{T} $. Thus, we have the following equality:

\begin{equation}
    \label{eqdeg}
    \mathbb{K}[X]=\mathbb{K}[Y] \otimes \mathbb{K}[\widetilde{T}]=\mathbb{K}[Y] \otimes \mathbb{K}[t_1,t_1^{-1},\dots,t_q,t_q^{-1}],
\end{equation}
where $ t_1, \dots, t_q $ are the coordinate functions on the torus $ \widetilde{T} $. Take an automorphism $ {\phi \in \mathrm{Aut}(X) }$. Then $ \phi^* $ is an automorphism of the algebra~(\ref{eqdeg}). We show that $ \phi^* $ acts on the coordinate functions $ y_1, \dots, y_p $ of the variety $ Y $ and $ t_1, \dots, t_q $ of the torus $ \widetilde{T} $ as follows:
\begin{equation}
\label{eqdeg2}
    \phi^*:\ y_i\mapsto \phi^*(y_i),\ i=1,\dots,p;\  t_j\mapsto \nu^* (t_j),\ j=1,\dots,q
\end{equation}
for some automorphism $ \nu^* $ of the algebra $ \mathbb{K}[\widetilde{T}] $. Indeed, under an action of any automorphism, invertible functions, in particular, $ t_i $, map to invertible ones, and the algebra $ \mathbb{K}[Y] $ does not contain invertible functions except constants. Since the invertible elements of the algebra~(\ref{eqdeg}) are Laurent monomials of the variables $ t_1, \dots, t_q $, elements $ \nu^*(t_j) $ do not depend on $ y_1, \dots, y_p $.

Suppose that the group $ \mathrm{Aut}(X) $ is connected. Let us show that the connectedness of $ \mathrm{Aut}(X) $ implies the connectedness of $ \mathrm{Aut}(\widetilde{T}) $, this gives a contradiction. Choose any automorphism $ \psi \in \mathrm{Aut}(\widetilde{T}) $ and construct an automorphism $ \phi \in \mathrm{Aut}(X) $ such that for $ \phi^* $ the following holds: $$ \phi^*: y_i \mapsto y_i, i = 1, \dots, p;\ t_j \mapsto \psi^*(t_j), j = 1, \dots, q. $$ From the connectedness of $ \mathrm{Aut}(X) $, it follows that $ \phi $ can be included in some algebraic family $ \{ \phi_s \}_{s \in S} $, containing the identity automorphism. Using~(\ref{eqdeg2}) we get that each automorphism~$ \phi_s^* $ has the form $$ \phi_s^*: y_i \mapsto \phi_s^*(y_i), i = 1, \dots, p;\ t_j \mapsto \nu_s^*(t_j), j = 1, \dots, q, $$ where $ \nu_s^* $ is an automorphism of the algebra $ \mathbb{K}[\widetilde{T}] $. Therefore, the family $ \{ \nu_s \}_{s \in S} $ is an algebraic family containing the automorphism $ \psi $ and the identity automorphism of the torus $ \widetilde{T} $. Hence Proposition~\ref{proposdeg} is proved. \end{proof}

Further, assume that $X$ is a non-degenerate affine toric variety. For such varieties the Cox ring $ R(X) $ is well defined; see Section~\ref{sectprelim}. Recall that antihomomorphisms
$$\alpha:\ \mathrm{Aut}(X)\to\mathrm{Aut}(\mathrm{Cl}(X)),$$ $$\beta:\ \widetilde{\mathrm{Aut}}(R(X))\to \mathrm{Aut}(X)$$ and homomorphism $$\gamma:\ \widetilde{\mathrm{Aut}}(R(X))\to \mathrm{Aut}(\mathrm{Cl}(X))$$
were introduced in the same section. 

\begin{lemma}
\label{lemdiagram}
For the maps $ \alpha, \beta $, and $ \gamma $, the equality $ \alpha \circ \beta = \gamma $ holds.
    %Для отображений $\alpha,\beta$ и $\gamma$ выполняется $\alpha\circ\beta=\gamma$.
\end{lemma}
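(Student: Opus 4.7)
The plan is, for arbitrary $\psi \in \widetilde{\mathrm{Aut}}(R(X))$, to set $\phi := \beta(\psi) \in \mathrm{Aut}(X)$ and $\phi_0 := \gamma(\psi) \in \mathrm{Aut}(\mathrm{Cl}(X))$, and to prove the equality of group homomorphisms $\alpha(\phi) = \phi_0$ on $\mathrm{Cl}(X)$. Since the exact sequence $M \to \mathrm{WDiv}_T(X) \to \mathrm{Cl}(X) \to 0$ recalled in Section~\ref{subsectCl} shows that the classes $[D_1], \dots, [D_r]$ generate $\mathrm{Cl}(X)$, it is enough to verify that $\phi_0([D_i]) = [\phi^{-1}(D_i)] = \alpha(\phi)([D_i])$ for every $i = 1, \dots, r$.

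To establish this, I would use the explicit toric Cox ring $R(X) = \mathbb{K}[T_1, \dots, T_r]$ with $\deg(T_i) = [D_i]$ from Section~\ref{subsectR}, and pass to the automorphism $\widetilde{\phi} \colon \mathbb{A}^r \to \mathbb{A}^r$ whose comorphism is $\psi$. The fact that $\psi$ normalizes the $\mathrm{Cl}(X)$-grading translates into $\widetilde{\phi}$ normalizing the N\'eron--Severi quasi-torus action of $N(X)$ on $\mathbb{A}^r$, while the compatibility $\psi|_{R(X)_0} = \phi^*$ forces $\widetilde{\phi}$ to descend through the good quotient $\pi \colon \mathbb{A}^r \to X = \mathrm{Spec}(R(X)^{N(X)})$ to $\phi$, so that $\pi \circ \widetilde{\phi}^{-1} = \phi^{-1} \circ \pi$. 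Tracking divisors, the coordinate hyperplane $V(T_i) \subset \mathbb{A}^r$ descends under $\pi$ to the prime divisor $D_i$, and its preimage $\widetilde{\phi}^{-1}(V(T_i)) = V(\psi(T_i))$ descends to $\phi^{-1}(D_i)$.

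To close the argument, I would invoke the standard principle of toric Cox theory that the vanishing locus of a $\mathrm{Cl}(X)$-homogeneous element $f \in R(X)_u$ descends to an effective Weil divisor on $X$ whose class equals $u$. Applied to $f = \psi(T_i) \in R(X)_{\phi_0([D_i])}$, this gives $[\phi^{-1}(D_i)] = \phi_0([D_i])$, completing the verification on generators. The main obstacle is formulating and invoking this last principle cleanly when $X$ need not be $\mathbb{Q}$-factorial and $\pi$ is only a GIT (rather than geometric) quotient; the most direct route is through the $N(X)$-equivariant correspondence between $\mathrm{Cl}(X)$-homogeneous principal ideals in $R(X)$ and Weil divisors on $X$. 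An alternative, more algebraic route avoids the quotient entirely: for nonzero $s \in R(X)_u$ one defines the effective divisor $\mathrm{div}_X(s)$ on $X$ of class $u$ via the identification $R(X)_u \cong L(X,D)$, uses the $\phi^*$-semi-linear identity $\psi(as) = \phi^*(a)\psi(s)$ for $a \in \mathbb{K}[X]$ together with $\mathrm{div}(a\circ\phi) = \phi^{-1}(\mathrm{div}(a))$, and matches classes after comparing the pullback structure on the rank-one reflexive modules $L(X,D_u)$ and $L(X,D_{\phi_0(u)})$.
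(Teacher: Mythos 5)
Your reduction to the generators $[D_1],\dots,[D_r]$ is sound, and your primary, geometric route --- descending divisors through the good quotient $\pi\colon \mathbb{A}^r\to X$ --- is genuinely different from what the paper does. The paper never passes to $\mathrm{Spec}(R(X))$: it works with the abstract presentation $R(X)=S_K/I$, uses $\mathrm{div}(\phi^*(f))=\phi^{-1}(\mathrm{div}(f))$ to show that $\phi^*$ restricts to bijections $L(X,D)\to L(X,\phi^{-1}(D))$, checks that this yields a well-defined automorphism of $R(X)$ lying in $\beta^{-1}(\phi)$ and shifting the degree $[D]$ to $[\phi^{-1}(D)]$, and then invokes exactness of the Cox sequence: any other lift of $\phi$ differs by an element of $N(X)$, which preserves the homogeneous components, so every $\psi\in\beta^{-1}(\phi)$ induces the same degree shift, namely $\gamma(\psi)=\alpha(\beta(\psi))$. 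This is essentially your ``alternative, more algebraic route,'' and it sidesteps exactly the point you flag as the obstacle in the geometric route: to conclude $[\phi^{-1}(D_i)]=\phi_0([D_i])$ from the descent of $V(\psi(T_i))$ you need not only that $\pi(V(T_i))=D_i$ and that a homogeneous element of degree $u$ cuts out a divisor of class $u$, but also that $\mathrm{div}_X(\psi(T_i))$ is the prime divisor $\phi^{-1}(D_i)$ with multiplicity one (otherwise you only obtain $m[\phi^{-1}(D_i)]=\phi_0([D_i])$ for some $m\geq 1$). These facts are standard for characteristic-space quotients and hold without any $\mathbb{Q}$-factoriality assumption, but they are external inputs of roughly the same depth as the lemma itself, whereas the paper's argument is self-contained given the exact sequence it already cites. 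What your route buys is a transparent geometric picture and a clean check on generators only; what the paper's route buys is independence from the polynomial presentation of $R(X)$ and from any divisor-descent machinery.
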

\begin{proof}
Let $ \phi \in \mathrm{Aut}(X) $. Following~\cite{IVSA10}, we construct an automorphism from $ \beta^{-1}(\phi) $. Let us show that $ \phi^* $ extends to a map
\begin{equation}
\label{eqlemdiagram}
    \phi^*:\ L(X,D)\to L(X,\phi^{-1}(D))
\end{equation}
for any Weil divisor $ D \in \mathrm{WDiv}(X) $. Indeed, $$ (\phi^*(f))(\phi^{-1}(x)) = f(x) $$ for any $ x \in X $, $ f \in \mathbb{K}(X)^{\times} $. Therefore,
\begin{equation}
\label{eqlemdiagdivphistar}
    \mathrm{div}(\phi^*(f))=\phi^{-1}(\mathrm{div}(f)),
\end{equation}
and it holds that
$$f\in L(X,D) \Longleftrightarrow
\mathrm{div}(f)+D\geq 0
\Longleftrightarrow
\phi^{-1}(\mathrm{div}(f))+\phi^{-1}(D)\geq 0
\overset{\eqref{eqlemdiagdivphistar}}{\Longleftrightarrow} $$ $$
\overset{\eqref{eqlemdiagdivphistar}}{\Longleftrightarrow} 
\mathrm{div}(\phi^*(f))+\phi^{-1}(D)\geq 0
\Longleftrightarrow
\phi^*(f)\in L(X,\phi^{-1}(D))$$ 
for any $ f \in \mathbb{K}(X)^{\times} $, $ D \in \mathrm{WDiv}(X) $. Thus,~(\ref{eqlemdiagram}) is proved. Consequently, $$ \phi^*: S_K = \bigoplus_{D \in K} L(X,D) \to S_{\phi^{-1}(K)} = \bigoplus_{D \in \phi^{-1}(K)} L(X,D) .$$
Let us prove that $ \phi^* $ defines a correct automorphism of $ R(X) $. An image of $ 1 - \chi(E) $ under the map $ \phi^* $ is $ 1 - \phi^*(\chi(E)) $, where $ 1 $ is a homogeneous element of degree $ 0 $ and $ \phi^*(\chi(E)) $ has degree $ -\phi^{-1}(E) $ by~(\ref{eqlemdiagram}). Define the group homomorphism $$ \chi' = \phi^* \circ \chi \circ \phi:\ \mathrm{PDiv}(X) \cap \phi^{-1}(K) \to \mathbb{K}(X)^{\times} .$$ The constructed homomorphism $\chi'$ satisfies the equality $ \mathrm{div}(\chi'(D)) = D $ for any Weil divisor $ D \in \mathrm{PDiv}(X) \cap \phi^{-1}(K) $. Indeed, $$ \mathrm{div}(\chi'(D)) = \mathrm{div}(\phi^* \circ \chi \circ \phi(D)) \overset{\eqref{eqlemdiagdivphistar}}{=} \phi^{-1}(\mathrm{div}(\chi \circ \phi(D))) = \phi^{-1}(\phi(D)) = D $$ for any principal Weil divisor $ D $ from $ \phi^{-1}(K) $. Thus, $ \phi^* $ maps the ideal $ I $ to the ideal of the ring $ S_{\phi^{-1}(K)} $, generated by elements $ 1 - \chi'(D) $ for all Weil divisors $ D $ from 
$ \mathrm{PDiv}(X) \cap \phi^{-1}(K) $, where $ 1 $ is a homogeneous element of degree $ 0 $, and the element $ \chi'(D) $ is homogeneous and has degree $ -D $. Therefore, $ \phi^* $ is a homomorphism that maps the Cox ring constructed using the homomorphism $ \chi $ and subgroup $ K $, to the Cox ring constructed using the homomorphism $ \chi' $ and subgroup $ \phi^{-1}(K) $. The Cox ring does not depend on the choice of a homomorphism and a subgroup in the group of Weil divisors, satisfying the conditions from Section~\ref{subsectR}. Moreover, a homomorphism of the Cox ring to itself obtained by the same construction from the automorphism $ \phi^{-1} $ is an inverse to $ \phi^* $. Hence, $\phi^*$ is an automorphism of the Cox ring.

Therefore, it is shown that the set $ \beta^{-1}(\phi) $ contains the automorphism $ \phi^* $, which maps the Cox ring component of degree $ [D] $ to the component of degree $ [\phi^{-1}(D)] $ with respect to the $ \mathrm{Cl}(X) $-grading. It remains to note that from the exactness of the sequence~(\ref{exactsequence}), any other automorphism from $ \beta^{-1}(\phi) $ differs from $ \phi^* $ by an automorphism corresponding to the action of some element from the N\`eron~--- Severi quasi-torus. Homogeneous components of $R(X)$ are preserved under the action of the quasi-torus $ N(X) $. Consequently, for any automorphism $ \psi^* \in \widetilde{\mathrm{Aut}}(R(X)) $ and for any Weil divisor $ D $, it holds that $$ \gamma(\psi^*) : [D] \mapsto [\beta(\psi^*)^{-1}(D)] = \alpha \circ \beta(\psi^*)([D]). $$ Thus, Lemma~\ref{lemdiagram} is proved.\end{proof}

Consider the kernel of the homomorphism $ \gamma $. It consists precisely of those automorphisms of the Cox ring that preserve the $\mathrm{Cl}(X)$-grading. With any element $ g^* \in \mathrm{Ker}\,\gamma $, one can associate an automorphism
$${g\in\mathrm{Aut}(\mathrm{Spec}(R(X))) = \mathrm{Aut}(\mathbb{A}^r)}.$$ We define $$G:=\{ g\in \mathrm{Aut}(\mathbb{A}^r)\ |\  g^*\in\mathrm{Ker}\,\gamma\}\subseteq \mathrm{Aut}(\mathbb{A}^r).$$ 

\begin{lemma}
    The subgroup $ G $ is a connected subgroup of $ \mathrm{Aut}(\mathbb{A}^r) $.
    \label{lemkergamma}
\end{lemma}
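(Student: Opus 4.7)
Fix $g\in G$ and let $g^*\in\mathrm{Ker}\,\gamma$ be the associated automorphism of the Cox ring, so $g^*(T_i)=f_i\in R(X)_{[D_i]}$ for every $i$. The strategy is to describe $G$ through concrete generators, realise each generator inside an explicit algebraic family in $G$ passing through the identity, and then compose these families to cover an arbitrary element of $G$.

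Since $X$ is non-degenerate, the classes $[D_1],\dots,[D_r]$ span a pointed cone in $\mathrm{Cl}(X)\otimes\mathbb{Q}$; under this positivity, the grading-preserving automorphisms of $R(X)=\mathbb{K}[T_1,\dots,T_r]$ are generated by (a) the diagonal torus $(\mathbb{K}^\times)^r$ acting by independent rescalings $T_i\mapsto\lambda_i T_i$, (b) transpositions $T_i\leftrightarrow T_j$ whenever $[D_i]=[D_j]$, and (c) root unipotents $T_i\mapsto T_i+\lambda T^\alpha$ with $\alpha_i=0$ and $\sum_j\alpha_j[D_j]=[D_i]$. For each generator the required family is elementary: a diagonal element $(\lambda_1,\dots,\lambda_r)$ lies in $x_i\mapsto(1-s+s\lambda_i)x_i$ over the cofinite open $S\subseteq\mathbb{A}^1$ on which all factors are non-zero; a transposition $T_i\leftrightarrow T_j$ lies in the linear family with $2\times 2$ block $\begin{pmatrix}1-s & s \\ s & 1-s\end{pmatrix}$ of determinant $1-2s$ on $\mathbb{A}^1\setminus\{1/2\}$; a root unipotent $T_i\mapsto T_i+\lambda T^\alpha$ lies in the triangular family $T_i\mapsto T_i+s\lambda T^\alpha$, an automorphism of $\mathbb{A}^r$ for every $s\in\mathbb{A}^1$. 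Each of these families lies inside $G$ because, term by term, it preserves the $\mathrm{Cl}(X)$-grading.

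For a general $g=g_1\circ\cdots\circ g_k$ expressed as a product of the generators above, with corresponding families $\{h^{(i)}_{s_i}\}_{s_i\in S_i}$, the composition $(s_1,\dots,s_k,x)\mapsto h^{(1)}_{s_1}\circ\cdots\circ h^{(k)}_{s_k}(x)$ defines a morphism $S_1\times\cdots\times S_k\times\mathbb{A}^r\to\mathbb{A}^r$; the parameter space $S_1\times\cdots\times S_k$ is an irreducible affine variety, its value at $(0,\dots,0)$ is the identity and at $(1,\dots,1)$ is $g$, and all intermediate values lie in $G$ because $G$ is a subgroup. This is the algebraic family in $G$ required by the definition of connectedness. \textbf{Main obstacle:} the genuinely non-trivial input is the structural description of $G$ by the three types of generators, which rests on the positivity of the $\mathrm{Cl}(X)$-grading — a consequence of the non-degeneracy of $X$. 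Once this structural description is granted, the construction of the elementary families and the composition argument are routine verifications.
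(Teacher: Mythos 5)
Your composition-of-families argument at the end is fine, but the structural input you rely on --- that $G$ is generated by the diagonal torus, transpositions of variables of equal degree, and single-monomial elementary automorphisms $T_i\mapsto T_i+\lambda T^{\alpha}$ --- is not available, and the premise you invoke for it is false. Non-degeneracy of $X$ means that $\sigma$ is full-dimensional, equivalently that $X$ has only constant invertible functions; it does \emph{not} make the classes $[D_1],\dots,[D_r]$ span a pointed cone in $\mathrm{Cl}(X)\otimes\mathbb{Q}$. For $X=\mathbb{A}^n$ the class group is trivial, and for $X=\mathbb{V}(xy-zw)$ (Example~\ref{ex4} of the paper) one has $\mathrm{Cl}(X)\simeq\mathbb{Z}$ with $[D_1]=1$ and $[D_2]=-1$, so the degrees span a line. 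In the affine case the degree-zero component of the Cox ring is all of $\mathbb{K}[X]$, which is the opposite of the positivity underlying Cox's generation theorem in the complete simplicial case. Worse, the generation statement itself fails: for $X=\mathbb{A}^3$ one has $G=\mathrm{Aut}(\mathbb{A}^3)$, and by Shestakov--Umirbaev the Nagata automorphism is not a composition of affine and elementary one-monomial automorphisms. So the ``main obstacle'' you defer is not a routine structural fact; it is false, and wild automorphisms escape your argument entirely.

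The paper's proof needs no generation result. Given $\phi^*\in\mathrm{Ker}\,\gamma$, it splits off the affine-linear part $l(\phi^*)$ (the degree $\leq 1$ terms of each $\phi^*(T_i)$), checks that $l$ is compatible with composition so that $l(\phi^*)$ is again a grading-preserving automorphism, and writes $\phi^*=l(\phi^*)\circ h^*$ with $h^*=l(\phi^*)^{-1}\circ\phi^*$ having identity linear part. The linear parts form a connected algebraic group (a product of groups $\mathrm{GL}_{n_i}(\mathbb{K})$, with $\mathrm{GL}_{n_i}(\mathbb{K})\rightthreetimes\mathbb{K}^{n_i}$ for the degree-zero block), and each $h^*$ is joined to the identity by the algebraic family $h^*_t=(\xi_t^*)^{-1}\circ h^*\circ\xi_t^*$, where $\xi_t^*(T_i)=tT_i$, extended by $h_0^*=\mathrm{id}$ at $t=0$. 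This conjugation-by-dilation trick handles arbitrary, possibly wild, grading-preserving automorphisms and is exactly what your appeal to generators would have to be replaced by.
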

\begin{proof}

Using methods of~\cite[Lemma 4]{Shafarevich} and~\cite[Theorem 6]{Popov}, we show that each automorphism from $ G $ is a composition of automorphisms from the subgroups $ A $ and~$ H $, where $ A $ and $ H $ are connected subgroups in $ G $. This implies the connectedness of $ G $ in $ \mathrm{Aut}(\mathbb{A}^r) $.

Recall that the Cox ring of the toric variety $ X $ is a polynomial ring with a $ \mathrm{Cl}(X) $-grading: $$ R(X)=\mathbb{K}[T_1,\dots,T_r], $$ where $ \mathrm{deg}(T_i)=[D_i]\in\mathrm{Cl}(X) $.

For any automorphism $ \phi^* \in\mathrm{Aut}(R(X)) $, let us define a homomorphism $ l(\phi^*) $ of the algebra $ R(X) $ into itself, constructed as follows. Suppose that $ \phi^* $ acts on the variables according to the formula
\begin{equation}
\label{eqlemG}
   \phi^*:\ T_i \mapsto F_{i0}+F_{i1}(T_{1},\dots,T_{r})+\dots+F_{im}(T_{1},\dots,T_{r}), 
\end{equation}
where $ F_{ij}(T_{1},\dots,T_{r}) $ is a form of degree $ j $ in $ T_1,\dots,T_r $. Then we define $$ l(\phi^*):\ T_i\mapsto F_{i0}+F_{i1}(T_{1},\dots,T_{r}) $$ and extend it to $ R(X)=\mathbb{K}[T_1,\dots,T_r] $ by linearity and multiplicativity.

Note that $$ l(\phi^* \circ \psi^*)=l(\phi^*)\circ l(\psi^*) $$ for any $ \phi^*,\psi^*\in\mathrm{Aut}(R(X)) $. Hence, we have $$ \mathrm{id}_{R(X)}=l(\phi^* \circ (\phi^*)^{-1})=l(\phi^*)\circ l((\phi^*)^{-1}), $$ which means $ l(\phi^*)^{-1}=l((\phi^*)^{-1}) $, and $ l(\phi^*) $ is invertible. Therefore, $ l(\phi^*) $ is an automorphism of the algebra $ R(X) $ for any automorphism $ \phi^*\in \mathrm{Aut}(R(X)) $.

Further, consider the set $$ A^*:= \{l(\phi^*)\ |\ \phi^*\in\mathrm{Ker}\,\gamma \} .$$ Let us check that if an automorphism $ \phi^* $ normalizes the $ \mathrm{Cl}(X) $-grading, then $ l(\phi^*) $ normalizes the $ \mathrm{Cl}(X) $-grading as well. To do this we show that for any element $ G \in R(X) $ we have
\begin{equation}
    \label{eqlemG2}
    \mathrm{deg}(\phi^*(G))=\mathrm{deg}(l(\phi^*)(G)).
\end{equation}
Then we can take $ \gamma(\phi^*) $ as $ \phi_0 $ for $ l(\phi^*) $ in the definition of $ \widetilde{\mathrm{Aut}}(X) $. For $ T_1,\dots,T_r $, equality~(\ref{eqlemG2}) is satisfied because all terms in expression~(\ref{eqlemG}) are homogeneous and have degree $ \gamma(\phi^*)([D_i]) $. For products of homogeneous elements and sums of homogeneous elements of the same degree, equality~(\ref{eqlemG2}) is obtained by the linearity and multiplicativity of $ l(\phi^*) $.

Moreover, $ \gamma(\phi^*)=\gamma(l(\phi^*)) $. Therefore, for any $ \phi^*\in\mathrm{Ker}\,\gamma $, the automorphism $ l(\phi^*) $ is also contained in the kernel of $ \gamma $. It is straightforward to verify that $ A^* $ is a subgroup of~$ \mathrm{Ker}\,\gamma $.

Consider the subgroup $$ A:=\{ a\in G\ |\ a^*\in A^*\}\subseteq G .$$

Let us prove that the subgroup $ A $ is connected. Suppose that there are exactly $ k $ distinct elements $ d_1,\dots,d_k $ among $ [D_1],\dots,[D_r] $ and for any $ i=1,\dots,k $ there are exactly $ n_i $ variables $ T_j $ with degree $ d_i $, i.e., $ r=n_1+\dots+n_k $. Let us introduce a new notation for the indices of $ T_j $. Let 
$$\mathrm{deg}(T_{11})=\dots=\mathrm{deg}(T_{1n_1})=d_1,$$
$$\dots$$
$$\mathrm{deg}(T_{k1})=\dots=\mathrm{deg}(T_{kn_k})=d_k.$$
If there is no zero element among the elements $ d_1,\dots,d_k $, then the group
\begin{equation}
    \label{eqlemkergamma}
    A\simeq A^* = \mathrm{GL}_{n_1}(\mathbb{K})\times\dots\times \mathrm{GL}_{n_k}(\mathbb{K})
\end{equation}
is connected. If $ d_i=0\in\mathrm{Cl}(X) $, then the corresponding factor $ \mathrm{GL}_{n_i}(\mathbb{K}) $  in the product~(\ref{eqlemkergamma}) is replaced by $ \mathrm{GL}_{n_i}(\mathbb{K}) \rightthreetimes \mathbb{K}^{n_i} $. In this case, $ A $ is also connected.

By definition, put $$H^*:=\{l(\phi^*)^{-1}\cdot \phi^*\ |\ \phi^*\in \mathrm{Ker}\,\gamma\}=\{\phi^*\in\mathrm{Ker}\,\gamma\ |\ l(\phi^*)=\mathrm{id}_{R(X)}\}.$$ Note that $H^*$ is a subgroup of $\mathrm{Ker}\,\gamma$.

Let us consider the subgroup $$ H:=\{h\in G\ |\ h^*\in H^* \} $$ and prove that it is connected. Take any automorphism $ h\in H $ and the corresponding automorphism $ h^*\in H^* $. The linear part of the automorphism $ h^* $ is the identity automorphism. Therefore, $ h^* $ is of the form $$ h^*(T_i)=T_i+ \sum_{j=2}^{m_i} h_{ij},\ i=1,\dots,r $$ for some numbers $ m_i\in\mathbb{N}_{\geq 2} $, where $ h_{ij} $ are either zero or homogeneous forms of degree~$ j $ in $ T_1,\dots, T_r $. Here, homogeneity is understood in the sense of the standard grading $ \mathrm{deg}(T_j)=1\in \mathbb{Z} $ for any $ j=1,\dots ,r $.

For any element $ t\in \mathbb{K}^{\times} $, denote by $ \xi_{t}^*\in A^* $ the automorphism that acts on the variables as follows: $$ \xi_{t}^*(T_i)=tT_i,\ i=1,\dots,r. $$
Let $$ h^*_t:=(\xi_{t}^*)^{-1}\circ h^* \circ \xi_{t}^*, $$ then $$ h^*_t(T_i)=T_i+\sum_{j=2}^{m_i}t^{j-1}h_{ij},\ i=1,\dots,r. $$
Define $ h^*_0:=\mathrm{id}_{R(X)} $, and then $ \{h_t\}_{t\in\mathbb{K}}\subseteq H $ is an algebraic family containing $ h=h_1 $ and $ \mathrm{id}_{\mathbb{A}^r}=h_0 $. Consequently, the subgroup $ H $ is connected.

It remains to note that for any $ \phi^*\in \mathrm{Ker}\,\gamma $, it holds that $$ \phi^*=l(\phi^*)\circ l(\phi^*)^{-1}\circ \phi^*=a^*\circ h^*, $$ where $ a^*=l(\phi^*)\in A^* $, and $ h^*=l(\phi^*)^{-1}\circ \phi^*\in H^*. $ Thus, for any $ \phi\in G $, there exists a decomposition $ \phi=h\circ a $, where $ h\in H$ and $ a\in A $. From the connectedness of the subgroups~$ A $ and $ H $, the connectedness of the group $ G $ follows. Hence, Lemma~\ref{lemkergamma} is proved.\end{proof}

Note that the antihomomorphism $ \beta $ surjectively maps the kernel of $ \gamma $ onto the kernel of~$ \alpha $. Indeed, if an element $ f $ is contained in the kernel of $ \alpha $, then its preimage with respect to $ \beta $ exists and lies in the group $ \widetilde{\mathrm{Aut}}(R(X)) $ due to the surjectivity of~$ \beta $. Then, by Lemma~\ref{lemdiagram}, it follows that $ \gamma(\beta^{-1}(f))=\alpha(f)=\mathrm{id}_{\mathrm{Cl}(X)} $ and the element $ \beta^{-1}(f) $ is contained in $ \mathrm{Ker}\,\gamma $.

\begin{propos}
\label{thmAut0Kera}
    For a non-degenerate affine toric variety $ X $, the equality $$ \mathrm{Aut}(X)^0=\mathrm{Ker}(\mathrm{Aut}(X)\curvearrowright \mathrm{Cl}(X)) $$ holds.
\end{propos}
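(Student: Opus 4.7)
The plan is to prove the two inclusions separately. The forward inclusion $\mathrm{Aut}(X)^0 \subseteq \mathrm{Ker}(\mathrm{Aut}(X)\curvearrowright \mathrm{Cl}(X)) = \mathrm{Ker}\,\alpha$ is already available: it is \cite[Lemma 2.2]{IVBazhov} combined with the equality $\mathrm{Ker}\,\alpha = \mathrm{Ker}\,\widetilde{\alpha}$ from~\eqref{eqkerakertildea}. The real content is the reverse inclusion $\mathrm{Ker}\,\alpha \subseteq \mathrm{Aut}(X)^0$, and the plan is to exploit Lemma~\ref{lemkergamma} to transport connectedness from $G \subseteq \mathrm{Aut}(\mathbb{A}^r)$ down to $\mathrm{Ker}\,\alpha \subseteq \mathrm{Aut}(X)$ along $\beta$.

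Given $\phi \in \mathrm{Ker}\,\alpha$, I would first use the surjectivity $\beta(\mathrm{Ker}\,\gamma) = \mathrm{Ker}\,\alpha$ noted immediately before the proposition to pick a preimage $\phi^* \in \mathrm{Ker}\,\gamma$, and let $g \in G \subseteq \mathrm{Aut}(\mathbb{A}^r)$ be the automorphism of $\mathbb{A}^r = \mathrm{Spec}\,R(X)$ corresponding to $\phi^*$. By Lemma~\ref{lemkergamma}, $G$ is a connected subgroup of $\mathrm{Aut}(\mathbb{A}^r)$, so there exists an algebraic family $\{g_s\}_{s \in S}$ in $G$ containing both $g$ and $\mathrm{id}_{\mathbb{A}^r}$. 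Setting $\phi_s := \beta(g_s^*) \in \mathrm{Aut}(X)$, I would argue that $\{\phi_s\}_{s \in S}$ is an algebraic family in $\mathrm{Aut}(X)$ containing $\phi$ and $\mathrm{id}_X$; the definition of $\mathrm{Aut}(X)^0$ then forces $\phi \in \mathrm{Aut}(X)^0$, which is what is needed.

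The main obstacle I anticipate is checking that $\{\phi_s\}_{s \in S}$ is really an algebraic family, i.e.\ that algebraicity descends through $\beta$. Dually, $\{g_s\}_{s \in S}$ being algebraic means that the comorphism $\mu^*\colon R(X) \to \mathbb{K}[S] \otimes R(X)$, sending $f$ to the assignment $s \mapsto g_s^*(f)$, is a $\mathbb{K}$-algebra homomorphism, and I need to see that its restriction to $R(X)_0 = \mathbb{K}[X]$ takes values in $\mathbb{K}[S] \otimes \mathbb{K}[X]$. Since every $g_s^* \in \mathrm{Ker}\,\gamma$ preserves the $\mathrm{Cl}(X)$-grading, for each $f \in \mathbb{K}[X]$ and each $s \in S$ one has $g_s^*(f) \in R(X)_0$. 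Decomposing $\mu^*(f)$ in graded components via $\mathbb{K}[S] \otimes R(X) = \bigoplus_{u \in \mathrm{Cl}(X)} \mathbb{K}[S] \otimes R(X)_u$, every component of degree $u \ne 0$ vanishes at every closed point of $S$ and is therefore the zero element of $\mathbb{K}[S] \otimes R(X)_u$; hence $\mu^*(\mathbb{K}[X]) \subseteq \mathbb{K}[S] \otimes \mathbb{K}[X]$. By the description of $\beta$ through restriction to $R(X)_0$, this restricted coaction is precisely the one corresponding to $\{\phi_s\}$, which yields the required algebraicity.
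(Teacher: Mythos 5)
Your proposal is correct and follows essentially the same route as the paper: lift $\phi\in\mathrm{Ker}\,\alpha$ to $\mathrm{Ker}\,\gamma$ via the surjectivity of $\beta$, use Lemma~\ref{lemkergamma} to embed the lift in an algebraic family in $G\subseteq\mathrm{Aut}(\mathbb{A}^r)$, and push the family down through $\beta$ by restricting the comorphism to $R(X)_0=\mathbb{K}[X]$, the grading-preservation of each $g_s^*$ guaranteeing that the restriction lands in $\mathbb{K}[S]\otimes\mathbb{K}[X]$. Your pointwise-vanishing argument for why the nonzero-degree components of $\mu^*(f)$ are identically zero is a correct (and slightly more explicit) justification of the step the paper states tersely.
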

\begin{proof}

Recall that $$ \mathrm{Ker}(\mathrm{Aut}(X)\curvearrowright \mathrm{Cl}(X))=\mathrm{Ker}\,\widetilde{\alpha} \overset{\eqref{eqkerakertildea}}{=} \mathrm{Ker}\,\alpha .$$
Let us show that $ \mathrm{Ker}\,\alpha $ is a connected subgroup of $ \mathrm{Aut}(X) $. Take any automorphism $$ \phi \in \mathrm{Ker}\,\alpha\subseteq \mathrm{Aut}(X) .$$ By the surjectivity of $ \beta $, there exists an automorphism $$ \psi^*\in\mathrm{Ker}\,\gamma\subseteq \widetilde{\mathrm{Aut}}(R(X))$$ such that $ \beta(\psi^*)=\phi $. Automorphism $ \psi^* $ corresponds to an automorphism $$ \psi \in G\subseteq \mathrm{Aut}(\mathrm{Spec}(R(X)))=\mathrm{Aut}(\mathbb{A}^r).$$ The subgroup $ G $ is connected in $ \mathrm{Aut}(\mathbb{A}^r) $ by Lemma~\ref{lemkergamma}. Whence, for some irreducible affine algebraic variety $ S $, there exists an algebraic family $ \Psi=\{\psi_{s}\}_{s\in S}\subseteq G $, containing $ \psi $ and~$ \mathrm{id}_{\mathbb{A}^r} $. Since this family is algebraic, the map $$ \xi: S\times \mathbb{A}^r\to \mathbb{A}^r: (s,z)\mapsto \psi_s(z) $$ is a morphism of algebraic varieties.

For each element $ \psi_s\in \Psi $, consider $ \psi_s^*\in \mathrm{Aut}(R(X)) $. Note that $ \psi_s^* $ is contained in~$ \mathrm{Ker}\,\gamma $, because $ \psi_s\in G $ for any $ s\in S $. Therefore, the automorphism $ \psi_s^* $ preserves the $ \mathrm{Cl}(X) $-grading on $ R(X) $, and the restriction $ \psi_s^*|_{R(X)_0} $ is well defined. Denote
\begin{equation}
    \label{eqthmAut0Kera}
\phi_s^*:=\psi_s^*|_{R(X)_0}=\psi_s^*|_{\mathbb{K}[X]}\in\mathrm{Aut}(\mathbb{K}[X]).    
\end{equation}

The automorphism $ \phi_s^*\in \mathrm{Aut}(\mathbb{K}[X]) $ corresponds to an automorphism $ \phi_s$ from $\mathrm{Aut}(X) $. Moreover, $ \psi_s^*\in\mathrm{Ker}\,\gamma $ and $ \phi_s=\beta(\psi_s^*) $, hence, $ \phi_s\in \mathrm{Ker}\,\alpha $. Thus, the set $ \Phi=\{\phi_s\}_{s\in S} $ is a family in $ \mathrm{Ker}\,\alpha $, containing $ \phi $ and the identity automorphism of the variety $ X $. It remains to prove that the family $ \Phi $ is algebraic.

For the morphism $ \xi $, defined above, consider the homomorphism $$ \xi^*: R(X)\to \mathbb{K}[S]\otimes R(X) .$$ Note that
\begin{equation}
    \label{eqthmAut0Kera2}
(\xi^*(f))(s,z)=f(\xi(s,z))=f(\psi_s(z))=(\psi_s^*(f))(z)
\end{equation}
for any $ s\in S $, $ f\in R(X) $, and $ z\in \mathrm{Spec}(R(X))=\mathbb{A}^r $.

Let $ f\in R(X)_0 $. Then by~(\ref{eqthmAut0Kera2}) we have $$ \xi^*(f)\in \mathbb{K}[S]\otimes R(X)_0 ,$$ as $ \psi_s^*\in \mathrm{Ker}\,\gamma $, and therefore, $ \psi_s^*(f)\in R(X)_0 $.

Hence, the homomorphism $$ \zeta^*:=\xi^*|_{R(X)_0}: R(X)_0\to \mathbb{K}[S]\otimes R(X)_0 $$ is well defined.
Taking into account that $ R(X)_0=\mathbb{K}[X] $, we obtain that the algebra homomorphism $ \zeta^* $ corresponds to the morphism $$ \zeta: S\times X\to X .$$ We show that $\zeta$ is a required morphism, i.e., $ \zeta(s,x)=\phi_s(x) $. For any elements $ s\in S $, $ f\in\mathbb{K}[X] $, and $ x\in X $, it holds that
$$f(\zeta(s,x))=(\zeta^*(f))(s,x)=(\xi^*(f))(s,x) \overset{\eqref{eqthmAut0Kera2}}{=} (\psi_s^*(f))(x) \overset{\eqref{eqthmAut0Kera}}{=} (\phi_s^*(f))(x)=f(\phi_s(x)).$$

Therefore, the maximal ideals in $ \mathbb{K}[X] $, corresponding to the points $ \zeta(s,x) $ and $ \phi_s(x) $, coincide, i.e., $ \mathfrak{m}_{\zeta(s,x)}=\mathfrak{m}_{\phi_s(x)} $, where $ \mathfrak{m}_x=\{f\in\mathbb{K}[X]\ |\ f(x)=0\} $. Consequently, the equality $ \zeta(s,x)=\phi_s(x) $ holds, and the morphism $ \zeta $ is the required one.

Thus, we obtain that any automorphism $ \phi\in\mathrm{Ker}\,\alpha $ can be included in some algebraic family $ \Phi\subseteq \mathrm{Ker}\,\alpha $, containing the identity automorphism of the variety~$ X $. Therefore, $ \mathrm{Ker}\,\alpha $ is a connected subgroup of $ \mathrm{Aut}(X) $. Hence, we have an inclusion $$\mathrm{Ker}\,\alpha=\mathrm{Ker}(\mathrm{Aut}(X)\curvearrowright \mathrm{Cl}(X)) \subseteq \mathrm{Aut}(X)^0 .$$

The reverse inclusion follows from~\cite[Lemma 2.2]{IVBazhov}. Proposition~\ref{thmAut0Kera} is proved.\end{proof}

Therefore, the diagram in Figure 1 is commutative.
\\

\tikzset{every picture/.style={line width=0.75pt}} %set default line width to 0.75pt        

\begin{center}

\begin{tikzpicture}[x=0.75pt,y=0.75pt,yscale=-1,xscale=1]
%uncomment if require: \path (0,208); %set diagram left start at 0, and has height of 208

%Straight Lines [id:da9721167742484176] 
\draw    (201,28) -- (404.61,28) ;
\draw [shift={(404.61,28)}, rotate = 180] [color={rgb, 255:red, 0; green, 0; blue, 0 }  ][line width=0.75]    (12.35,-3.43) .. controls (9.56,-1.61) and (7.01,-0.47) .. (4.7,0) .. controls (7.01,0.47) and (9.56,1.61) .. (12.35,3.43)(7.65,-3.43) .. controls (4.86,-1.61) and (2.31,-0.47) .. (0,0) .. controls (2.31,0.47) and (4.86,1.61) .. (7.65,3.43)   ;
%Straight Lines [id:da6220942458616092] 
\draw    (206.11,74.07) -- (406.11,74.07) ;
\draw [shift={(406.11,74.07)}, rotate = 180] [color={rgb, 255:red, 0; green, 0; blue, 0 }  ][line width=0.75]    (12.35,-3.43) .. controls (9.56,-1.61) and (7.01,-0.47) .. (4.7,0) .. controls (7.01,0.47) and (9.56,1.61) .. (12.35,3.43)(7.65,-3.43) .. controls (4.86,-1.61) and (2.31,-0.47) .. (0,0) .. controls (2.31,0.47) and (4.86,1.61) .. (7.65,3.43)   ;
%Straight Lines [id:da755361705427154] 
\draw    (168.5,86.5) -- (251.5,147.88) ;
\draw [shift={(253.11,149.07)}, rotate = 216.48] [color={rgb, 255:red, 0; green, 0; blue, 0 }  ][line width=0.75]    (7.65,-2.3) .. controls (4.86,-0.97) and (2.31,-0.21) .. (0,0) .. controls (2.31,0.21) and (4.86,0.98) .. (7.65,2.3)   ;
%Straight Lines [id:da373943794578794] 
\draw    (430,87.5) -- (345.24,147.41) ;
\draw [shift={(343.61,148.57)}, rotate = 324.74] [color={rgb, 255:red, 0; green, 0; blue, 0 }  ][line width=0.75]    (7.65,-2.3) .. controls (4.86,-0.97) and (2.31,-0.21) .. (0,0) .. controls (2.31,0.21) and (4.86,0.98) .. (7.65,2.3)   ;

% Text Node
\draw (148.11,16.97) node [anchor=north west][inner sep=0.75pt]    {$\mathrm{Ker} \ \gamma $};
% Text Node
\draw (408.11,16.97) node [anchor=north west][inner sep=0.75pt]    {$\mathrm{Ker} \ \alpha =\mathrm{Aut}( X)^{0}$};
% Text Node
\draw (128.11,56.97) node [anchor=north west][inner sep=0.75pt]    {$\widetilde{\mathrm{Aut}}( R( X))$};
% Text Node
\draw (411.11,63.97) node [anchor=north west][inner sep=0.75pt]    {$\mathrm{Aut}( X)$};
% Text Node
\draw (257.61,145.47) node [anchor=north west][inner sep=0.75pt]    {$\mathrm{Aut}(\mathrm{Cl}( X))$};
% Text Node
\draw (175.58,36.99) node [anchor=north west][inner sep=0.75pt]  [rotate=-90]  {$\subseteq $};
% Text Node
\draw (438.08,36.99) node [anchor=north west][inner sep=0.75pt]  [rotate=-90]  {$\subseteq $};
% Text Node
\draw (395,115.9) node [anchor=north west][inner sep=0.75pt]    {$\alpha $};
% Text Node
\draw (294.5,53.4) node [anchor=north west][inner sep=0.75pt]    {$\beta $};
% Text Node
\draw (189,112.4) node [anchor=north west][inner sep=0.75pt]    {$\gamma $};
% Text Node
\draw (305.5,178.9) node [anchor=north west][inner sep=0.75pt]    {Figure\ 1.};

\end{tikzpicture}

\end{center}

Let us use the description of the neutral component obtained in Proposition~\ref{thmAut0Kera} to prove the criterion for the automorphism group of a
non-degenerate affine toric variety to be connected.
\begin{theorem}
\label{crit}
    Let $ X $ be a non-degenerate affine toric variety with an acting torus $ T=(\mathbb{K}^{\times})^n $. Then the following conditions are equivalent:
    \begin{enumerate}
        \item the automorphism group of $ X $ is connected;
        \item automorphisms of the Cox ring that normalize the $\mathrm{Cl}(X)$-grading preserve this grading, i.e., $ \widetilde{\mathrm{Aut}}(R(X))=\mathrm{Ker}\,\gamma $;
        \item there is no linear operator $ L\in\mathrm{GL}_n(\mathbb{Z}) $, $ L(\sigma)=\sigma $ such that $ L(v_i)=v_j $, but ${ [D_i]\neq [D_j] }$, where $ v_i $ is the primitive vector on the $ i $-th ray of the cone $ \sigma $, and $ [D_i] $ is the class of the corresponding $ T $-invariant prime divisor in the divisor class group.

    \end{enumerate}
\end{theorem}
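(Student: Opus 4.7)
The plan is to establish the chain (1) $\Leftrightarrow$ (2) $\Leftrightarrow$ (3). The first equivalence is formal: by Proposition~\ref{thmAut0Kera}, connectedness of $\mathrm{Aut}(X)$ amounts to $\alpha$ being trivial, and by Lemma~\ref{lemdiagram} together with the surjectivity of $\beta$ from~\eqref{exactsequence}, one has $\mathrm{Im}\,\alpha = \mathrm{Im}\,\gamma$; hence $\alpha$ is trivial if and only if $\gamma$ is trivial on all of $\widetilde{\mathrm{Aut}}(R(X))$, which is condition~(2). The easy direction of the second equivalence is also quick: any lattice automorphism $L \in \mathrm{GL}_n(\mathbb{Z})$ with $L(\sigma) = \sigma$ and $L(v_i) = v_j$ induces an automorphism $\phi_L$ of $X_\sigma$ permuting the $T$-invariant prime divisors in the same way, so if $[D_i] \neq [D_j]$ then $\widetilde{\alpha}(\phi_L) \neq \mathrm{id}$, and~(1) fails.

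The substantive work is the implication (not~(2)) $\Rightarrow$ (not~(3)). I would take $\phi^* \in \widetilde{\mathrm{Aut}}(R(X))$ with $\phi_0 := \gamma(\phi^*) \neq \mathrm{id}$ and extract a combinatorial symmetry of $\sigma$. Each $F_i := \phi^*(T_i)$ is $\mathrm{Cl}(X)$-homogeneous of degree $\phi_0([D_i])$, so its standard-degree-one part $\mathrm{Lin}(F_i)$ lies in $\mathrm{Span}\{T_j \mid [D_j] = \phi_0([D_i])\}$. Since $\phi^*$ is a $\mathbb{K}$-algebra automorphism of $\mathbb{K}[T_1, \dots, T_r]$, a Nakayama-style argument forces $\mathrm{Lin}(F_1), \dots, \mathrm{Lin}(F_r)$ to be linearly independent, and decomposing this basis property by $\mathrm{Cl}(X)$-weight yields the matching $|\{i \mid [D_i] = u\}| = |\{i \mid [D_i] = \phi_0(u)\}|$ for every $u$. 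Choosing any compatible bijection gives a permutation $\pi$ of $\{1, \dots, r\}$ with $[D_{\pi(i)}] = \phi_0([D_i])$.

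To pass from $\pi$ to the desired $L$, I would use the exact sequence
\[ 0 \to M \xrightarrow{A} \mathbb{Z}^r \xrightarrow{q} \mathrm{Cl}(X) \to 0, \qquad A(m) = (\langle v_i, m \rangle)_{i=1}^r, \]
from Section~\ref{subsectCl} (the map $A$ is injective because $\sigma$ is full-dimensional). The permutation matrix $P_\pi$ on $\mathbb{Z}^r$ satisfies $q \circ P_\pi = \phi_0 \circ q$ by the defining property of $\pi$, so $P_\pi$ preserves $\ker q = \mathrm{Im}\,A$. Restricting $P_\pi$ to $\mathrm{Im}\,A \cong M$ gives a lattice automorphism $L^* \in \mathrm{GL}(M)$, and the dual $L \in \mathrm{GL}(N) = \mathrm{GL}_n(\mathbb{Z})$ sends each $v_i$ to $v_{\pi(i)}$ (possibly after exchanging $\pi$ with $\pi^{-1}$ according to the convention). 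Since $L$ then permutes the rays of $\sigma$, it preserves $\sigma$; and because $[D_1], \dots, [D_r]$ generate $\mathrm{Cl}(X)$, the hypothesis $\phi_0 \neq \mathrm{id}$ ensures $[D_{\pi(i)}] \neq [D_i]$ for at least one $i$, exhibiting the forbidden configuration of~(3).

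The main obstacle I anticipate is the Nakayama step together with careful handling of possible degenerate features (for instance, some $[D_i]$ may be zero, in which case $F_i$ could acquire a nonzero constant term and $\phi^*$ need not fix the origin of $\mathbb{A}^r$, so the weight-matching argument must be argued at an appropriate quotient). Once a clean permutation $\pi$ is in hand, the lifting through $P_\pi$ is essentially automatic thanks to its commutativity with $q$, and the verification that $L$ preserves $\sigma$ reduces to the observation that $L$ permutes the primitive generators of its rays.
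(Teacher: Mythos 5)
Your proposal is correct and follows essentially the same route as the paper: the formal equivalence of (1) and (2) via $\alpha\circ\beta=\gamma$ and Proposition~\ref{thmAut0Kera}, the easy direction of (3) via \cite[Theorem 3.3.4]{CLS}, and for the hard direction the extraction of a permutation $\tau$ with $[D_{\tau(i)}]=\phi_0([D_i])$ from the invertibility of the linear part (the paper's Jacobian argument, which it defers to the proof of Theorem~\ref{thmcompgroup} and invokes through Corollary~\ref{sledcompgroup}), followed by lifting the permutation to $L\in\mathrm{GL}_n(\mathbb{Z})$ using that the permutation matrix preserves the relation lattice $\mathrm{Im}\,A=\ker q$ --- which is exactly the paper's matrix identity $V_{\tau^{-1}}=LV$ in different notation. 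The only differences are organizational (you argue the permutation step inline rather than by forward reference) and your worry about constant terms when some $[D_i]=0$ is harmless, since the Jacobian of a polynomial automorphism is a nonzero constant regardless of whether the origin is fixed.
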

Note that another condition equivalent to the connectedness of the automorphism group is given in Corollary~\ref{sledcompgroup}.
%Отметим, что ещё одно условие, эквивалентное связности группы автоморфизмов, будет приведено в следствии~\ref{sledcompgroup}.
\begin{proof}

Let us prove the equivalence of conditions (1) and (2). If $$ \widetilde{\mathrm{Aut}}(R(X))=\mathrm{Ker}\,\gamma ,$$
then by the diagram from Figure~1, we have $ \mathrm{Aut}(X)=\mathrm{Ker}\,\alpha $. By Proposition~\ref{thmAut0Kera}, it follows that $ \mathrm{Ker}\,\alpha=\mathrm{Aut}(X)^0 $. Therefore, $ \mathrm{Aut}(X)=\mathrm{Aut}(X)^0 $, and the group $ \mathrm{Aut}(X) $ is connected.
Conversely, if there exists an automorphism $$ \phi\in \widetilde{\mathrm{Aut}}(R(X)) \setminus \mathrm{Ker}\,\gamma ,$$ then $$ \beta(\phi)\in \mathrm{Aut}(X)\setminus \mathrm{Ker}\,\alpha=\mathrm{Aut}(X)\setminus \mathrm{Aut}(X)^0 ,$$ and the group $ \mathrm{Aut}(X) $ is not connected. Hence, the equivalence of conditions (1) and (2) is proved.

It remains to prove that conditions (1) and (3) are equivalent. The existence of a linear operator $ L\in\mathrm{GL}_n(\mathbb{Z}) $, $ L(\sigma)=\sigma $ such that for some natural numbers $ i,j $ it holds that $ L(v_i)=v_j $, but $ [D_i]\neq [D_j] $ is equivalent to the existence of a $ T $-equivariant automorphism $ \phi\in \mathrm{Aut}(X) $ such that for some $ i,j $ it holds that $ \phi(D_i)=D_j $, but $ [D_i]\neq [D_j] $, see~\cite[Theorem 3.3.4]{CLS}. Thus, $ \phi\not\in \mathrm{Aut}(X)^0 $, as $ \phi $ acts non-trivially on the divisor class group, and the group $ \mathrm{Aut}(X) $ is not connected.

Conversely, suppose that the automorphism group of $ X $ is not connected. Let us use Corollary~\ref{sledcompgroup}, which is proved in the next section. Since the automorphism group of $ X $ is not connected, it follows that there exists a non-trivial automorphism $ \phi $ of the group $ \mathrm{Cl}(X) $, permuting the elements $ [D_1],\dots,[D_r] $ according to some permutation $ \tau\in S_r $.

We fix some basis $ e_1,\dots,e_n $ of the lattice $ M $. Suppose that primitive vectors on the rays of the cone $ \sigma $ have coordinates
$$v_i=\begin{pmatrix}
    v_{i1} \\
    \vdots \\
    v_{in}
\end{pmatrix},\ i=1,\dots,r$$ in the basis of the vector space $ N_{\mathbb{Q}} $, dual to $ e_1,\dots,e_n $. 

Denote by $ V $ and $ V_{\tau^{-1}} $ matrices composed of the coordinates of these vectors:
$$V=\begin{pmatrix}
    v_1 & \dots & v_r
\end{pmatrix}
,\ V_{\tau^{-1}}=\begin{pmatrix}
 v_{\tau^{-1}(1)} & \dots & v_{\tau^{-1}(r)} 
\end{pmatrix}.$$
From Section~\ref{subsectCl}, it follows that for the elements $ [D_1],\dots,[D_r] $ the relations
\begin{equation}
\label{eqcrit}
    V \begin{pmatrix} [D_1] \\ \vdots \\ [D_r] \end{pmatrix} =
\begin{pmatrix}
    [\mathrm{div}(\chi^{e_1})] \\ \vdots \\ [\mathrm{div}(\chi^{e_n})]
\end{pmatrix}
    =
 \begin{pmatrix} 0 \\ \vdots \\ 0 \end{pmatrix}
\end{equation} 
hold and any other relations on the elements $ [D_1],\dots,[D_r] $ are linear combinations of the relations from~(\ref{eqcrit}), because the subgroup of $ T $-invariant principal divisors is generated by the elements $ \mathrm{div}(\chi^{e_1}),\dots,\mathrm{div}(\chi^{e_n}) $.
Since $ \phi $ is a group homomorphism we have
$$ V \begin{pmatrix} \phi([D_1]) \\ \vdots \\ \phi([D_r]) \end{pmatrix} = V
\begin{pmatrix} [D_{\tau(1)}] \\ \vdots \\ [D_{\tau(r)}] \end{pmatrix}=V_{\tau^{-1}}\begin{pmatrix} [D_1] \\ \vdots \\ [D_r] \end{pmatrix}=\begin{pmatrix} 0 \\ \vdots \\ 0 \end{pmatrix}.$$ By definition, put $$\begin{pmatrix} \widetilde{D_1} \\ \vdots \\ \widetilde{D_n} \end{pmatrix}:=
V_{\tau^{-1}}\begin{pmatrix} D_1 \\ \vdots \\ D_r \end{pmatrix}.$$ 

Divisors $ \widetilde{D_1}, \dots , \widetilde{D_n} $ are $ T $-invariant as linear combinations of $ T $-invariant ones. Moreover, they are principal since their image in the divisor class group is zero. Therefore,
$$\begin{pmatrix} \widetilde{D_1} \\ \vdots \\ \widetilde{D_n} \end{pmatrix}=
L
\begin{pmatrix}
    \mathrm{div}(\chi^{e_1}) \\ \vdots \\ \mathrm{div}(\chi^{e_n})
\end{pmatrix}$$ 
for some integer $ n\times n $-matrix $ L $. It remains to note that
$$\begin{pmatrix} \widetilde{D_1} \\ \vdots \\ \widetilde{D_n} \end{pmatrix}=V_{\tau^{-1}}\begin{pmatrix} D_1 \\ \vdots \\ D_r \end{pmatrix}=
L
\begin{pmatrix}
    \mathrm{div}(\chi^{e_1}) \\ \vdots \\ \mathrm{div}(\chi^{e_n})
\end{pmatrix}
=
LV
\begin{pmatrix}
    D_1 \\ \vdots \\ D_r
\end{pmatrix},$$ 
and elements $ D_1,\dots,D_r $ are independent. Hence, $ V_{\tau^{-1}}=LV $ and the matrix $ L $ is non-degenerate. The corresponding linear operator maps the vector $ v_i $ to the vector $ v_{\tau^{-1}(i)} $ for $ i=1,\dots,r $. Note that there exists a number $ j $ such that $$ [D_j]\neq [D_{\tau^{-1}(j)}]=\phi^{-1}([D_j]), $$ since $ \phi $ is a non-trivial automorphism of the divisor class group, and the elements $ [D_1],\dots,[D_r] $ generate the class group. Therefore, Theorem~\ref{crit} is proved.\end{proof}

\section{The component group}
\label{sectcompgroup}

Further assume that $X$ is a non-degenerate affine toric variety corresponding to a rational polyhedral cone $\sigma$. The \textit{component group} of the automorphism group of~$X$ is the quotient group $$\mathrm{Aut}(X)/\mathrm{Aut}(X)^0.$$
Denote by $\Sigma_D$ the set of maps of $\mathrm{Cl}(X)$ to itself such that they permute the elements $[D_1],\dots,[D_r]$, that is, $$\Sigma_D:=\{\phi:\mathrm{Cl}(X)\to \mathrm{Cl}(X)\ |\ \exists\tau\in S_r:\ \phi([D_i])=[D_{\tau(i)}]\}.$$ Each element of $\Sigma_D$ corresponds to at least one permutation from $S_r$ according to its action on $[D_1],\dots,[D_r]$. Moreover, the permutations corresponding to different elements of $\Sigma_D$ are distinct. Therefore, $|\Sigma_D|\leq |S_r|=r!$.

Let us describe the component group of $\mathrm{Aut}(X)$ and prove that it is finite.
\begin{theorem}
\label{thmcompgroup}
Let $X$ be a non-degenerate affine toric variety $X$. Then we have
     \begin{equation}
         \label{eqthmcompgroup}
         \mathrm{Aut}(X)/\mathrm{Aut}(X)^0\simeq \widetilde{\alpha}(\mathrm{Aut}(X))=\mathrm{Aut}(\mathrm{Cl}(X))\cap \Sigma_D.
     \end{equation}  
     In particular, 
     \begin{equation}
         \label{eqthmcompgroup2}
         |\mathrm{Aut}(X)/\mathrm{Aut}(X)^0|\leq r!,
     \end{equation}
     where $r$ denotes the number of rays of the cone $\sigma$.
\end{theorem}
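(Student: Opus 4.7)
The plan is to reduce the theorem to two statements and handle them via the Cox ring. For the isomorphism $\mathrm{Aut}(X)/\mathrm{Aut}(X)^0 \simeq \widetilde{\alpha}(\mathrm{Aut}(X))$, I will invoke Proposition~\ref{thmAut0Kera} to identify $\mathrm{Aut}(X)^0$ with $\mathrm{Ker}\,\widetilde{\alpha}$ and apply the first isomorphism theorem to the group homomorphism $\widetilde{\alpha}$. For the identification $\widetilde{\alpha}(\mathrm{Aut}(X)) = \mathrm{Aut}(\mathrm{Cl}(X)) \cap \Sigma_D$, I will combine \eqref{eqimaimtildea}, the surjectivity of $\beta$ from the exact sequence \eqref{exactsequence}, and Lemma~\ref{lemdiagram} to rewrite
$$\widetilde{\alpha}(\mathrm{Aut}(X)) = \alpha(\mathrm{Aut}(X)) = \alpha\circ\beta(\widetilde{\mathrm{Aut}}(R(X))) = \gamma(\widetilde{\mathrm{Aut}}(R(X))),$$
and then prove $\gamma(\widetilde{\mathrm{Aut}}(R(X))) = \mathrm{Aut}(\mathrm{Cl}(X)) \cap \Sigma_D$.

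The inclusion ``$\supseteq$'' is constructive. Given $\phi_0 \in \mathrm{Aut}(\mathrm{Cl}(X)) \cap \Sigma_D$, I will pick $\tau \in S_r$ with $\phi_0([D_i]) = [D_{\tau(i)}]$ and define $\phi^*(T_i) := T_{\tau(i)}$ on $R(X) = \mathbb{K}[T_1,\dots,T_r]$. A short monomial-by-monomial check gives $\phi^*(R(X)_u) = R(X)_{\phi_0(u)}$ for every $u \in \mathrm{Cl}(X)$, so $\phi^* \in \widetilde{\mathrm{Aut}}(R(X))$ with $\gamma(\phi^*) = \phi_0$.

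The substantive direction is ``$\subseteq$''. Given $\phi^* \in \widetilde{\mathrm{Aut}}(R(X))$ with $\phi_0 := \gamma(\phi^*)$, I write $\phi^*(T_i) = c_i + \sum_{j=1}^r c_{ij} T_j + h_i$, where $h_i$ collects all monomials of standard degree at least two. The plan is to exploit two complementary constraints on the matrix $C := (c_{ij})$. First, every automorphism of $\mathbb{K}[T_1,\dots,T_r]$ has a nowhere-vanishing constant Jacobian determinant, and its value at the origin equals $\det C$, so $C$ is invertible. Second, $\phi^*(T_i)$ is $\mathrm{Cl}(X)$-homogeneous of degree $\phi_0([D_i])$, which forces $c_{ij} \neq 0 \Rightarrow [D_j] = \phi_0([D_i])$. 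Partitioning $\{1,\dots,r\}$ into the fibers $I_u := \{i : [D_i] = u\}$, the matrix $C$ becomes block-structured, with possibly nonzero entries only in blocks whose row index set is $I_v$ and column index set is $I_{\phi_0(v)}$. Invertibility of $C$ then forces $\phi_0$ to permute $\{[D_1],\dots,[D_r]\}$ as a set and forces each such block to be square, yielding the multiplicity identity $|I_v| = |I_{\phi_0(v)}|$. Gluing arbitrary bijections $I_v \to I_{\phi_0(v)}$ together produces a permutation $\tau \in S_r$ with $\phi_0([D_i]) = [D_{\tau(i)}]$, i.e., $\phi_0 \in \Sigma_D$.

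The bound \eqref{eqthmcompgroup2} is then immediate from $|\Sigma_D| \le r!$, which is already established just before the theorem. The step I expect to be the main obstacle is the block-structure argument above: showing merely $\phi_0(\{[D_i]\}) \subseteq \{[D_j]\}$ is quick from the Jacobian at the origin, but turning this into a \emph{bona fide} permutation of the index set $\{1,\dots,r\}$ requires the multiplicity-preservation step $|I_v| = |I_{\phi_0(v)}|$, and this combinatorial bookkeeping (which must handle the case of repeated classes $[D_i]$, in particular the possibility that some $[D_i] = 0$) is the only place in the plan that is not essentially formal.
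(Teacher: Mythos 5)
Your proposal is correct and follows essentially the same route as the paper: Proposition~\ref{thmAut0Kera} plus the first isomorphism theorem, the chain $\widetilde{\alpha}(\mathrm{Aut}(X))=\alpha(\mathrm{Aut}(X))=\gamma(\widetilde{\mathrm{Aut}}(R(X)))$ via \eqref{eqimaimtildea} and Lemma~\ref{lemdiagram}, the permutation automorphism $T_i\mapsto T_{\tau(i)}$ for one inclusion, and the nonvanishing Jacobian for the other. The only real difference is the last step: the paper extracts $\tau$ directly from the Leibniz expansion of the (constant, nonzero) Jacobian determinant, which yields a single permutation with all $c_{i\tau(i)}\neq 0$ at once and so sidesteps the block-structure and multiplicity bookkeeping you flag as the main obstacle.
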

\begin{proof}
    The first part of~(\ref{eqthmcompgroup}) follows from Proposition~\ref{thmAut0Kera} and the Fundamental theorem on homomorphisms:
    $$\mathrm{Aut}(X)/\mathrm{Aut}(X)^0\simeq \mathrm{Aut}(X)/\mathrm{Ker}\,\widetilde{\alpha}\simeq \widetilde{\alpha}(\mathrm{Aut}(X)).$$
    Moreover, we have
    $$\widetilde{\alpha}(\mathrm{Aut}(X)) 
\overset{\eqref{eqimaimtildea}}{=} \alpha(\mathrm{Aut}(X)).$$
    By Lemma~\ref{lemdiagram}, the diagram from Figure~1 is commutative. Hence,
    $$\alpha(\mathrm{Aut}(X))=\gamma(\widetilde{\mathrm{Aut}}(R(X))).$$
    It remains to prove that $$\gamma(\widetilde{\mathrm{Aut}}(R(X)))= \mathrm{Aut}(\mathrm{Cl}(X))\cap \Sigma_D.$$ If $\phi$ is an automorphism of $\mathrm{Cl}(X)$ that permutes elements $[D_1], \dots, [D_r]$ according to some permutation $\tau$, then its preimage under $\gamma$ contains the automorphism ${T_i\mapsto T_{\tau(i)}}$. This map is an automorphism of the Cox ring and it normalizes the $\mathrm{Cl}(X)$-grading.

    Conversely, the inclusion $\gamma(\widetilde{\mathrm{Aut}}(R(X)))\subseteq \mathrm{Aut}(\mathrm{Cl}(X))$ follows from the definition of the homomorphism $\gamma$. To show the inclusion $\gamma(\widetilde{\mathrm{Aut}}(R(X)))\subseteq \Sigma_D$, take any ${\phi\in \widetilde{\mathrm{Aut}}(R(X))}$. The Jacobian of $\phi$ is a non-zero element of the field $\mathbb{K}$. Hence, there exists a permutation $\tau\in S_r$ such that $$\frac{\partial\phi(T_1)}{\partial T_{\tau(1)}} \dots \frac{\partial\phi(T_r)}{\partial T_{\tau(r)}}$$ contains a non-zero element of the field $\mathbb{K}$ as a summand. Therefore, for each $i=1,\dots ,r$, the element $\phi(T_i)$ contains a non-zero linear term in $T_{\tau(i)}$:$$\phi(T_i)=c_i T_{\tau(i)}+\dots,\ i=1\dots,r$$ for some non-zero $c_i \in \mathbb{K}$. 
    Taking into account that $\mathrm{deg}(T_i)=[D_i]$ and that the automorphism $\phi$ maps homogeneous elements to homogeneous ones, we obtain $$\gamma(\phi)([D_i])=[D_{\tau(i)}].$$ The inclusion $\gamma(\widetilde{\mathrm{Aut}}(R(X)))\subseteq \Sigma_D$ is proved.

    The inequality~(\ref{eqthmcompgroup2}) follows from the proved~(\ref{eqthmcompgroup}) and the fact that $|\Sigma_D|\leq r!$. Theorem~\ref{thmcompgroup} is proved.
\end{proof}

\begin{remark}
Let us remark that \cite[Corollary 4.7 (v)]{Cox95} contains a description of the component group of the automorphism group of a complete simplicial toric variety. We fix necessary notation according to~\cite{Cox95}. The rays of the fan~$\Delta$ corresponding to the toric variety $X$ can be represented as the partition $\Delta_1\cup\dots\cup \Delta_s$, where $T_j$, corresponding to the rays from one $\Delta_i$, have the same $\mathrm{Cl}(X)$-degree. Denote by $\mathrm{Aut}(N,\Delta)$ the automorphism group of the lattice $N$ that preserve the fan $\Delta$. Consider the subgroups $\Sigma_{\Delta_i}$ in the group $\mathrm{Aut}(N,\Delta)$, consisting of automorphisms that permute elements in $\Delta_i$ and do not change other elements. For a complete simplicial toric variety $X$ the following holds:
$$\mathrm{Aut}(X)/\mathrm{Aut}(X)^0\simeq \mathrm{Aut}(N,\Delta) / \prod_{i=1}^s \Sigma_{\Delta_i}.$$

Let us prove that for non-degenerate affine toric varieties the component group of the automorphism group has the same description. To do this, let us show that
\begin{equation}
\label{eqrem}
    \mathrm{Aut}(\mathrm{Cl}(X))\cap \Sigma_D\simeq \mathrm{Aut}(N,\sigma) / \prod_{i=1}^s \Sigma_{\Delta_i}.
\end{equation}
Consider the group homomorphism $$\kappa: \mathrm{Aut}(N,\sigma)\to \mathrm{Aut}(\mathrm{Cl}(X)).$$ Each automorphism $\lambda\in \mathrm{Aut}(N,\sigma)$ corresponds to a $T$-equivariant automorphism ${\phi_{\lambda}\in\mathrm{Aut}(X)}$ according to~\cite[Theorem 3.3.4]{CLS}. By definition put $\kappa(\lambda)=\widetilde{\alpha}(\phi_\lambda)$. We see that $$\kappa(\mathrm{Aut}(N,\sigma))\subseteq \widetilde{\alpha}(\mathrm{Aut}(X))=\mathrm{Aut}(\mathrm{Cl}(X))\cap \Sigma_D.$$ In fact, an equality holds, because for any automorphism~$\phi\in\mathrm{Aut}(\mathrm{Cl}(X))$ such that $\phi$ permutes the elements $[D_1],\dots,[D_r]$ according to some permutation $\tau$, there exists a non-degenerate linear operator $L$ of the lattice $N$ that permutes the rays of the cone $\sigma$ according to $\tau^{-1}$, as shown in the proof of Theorem~\ref{crit}. Consequently, $\kappa(L^{-1})=\phi$, and the homomorphism $\kappa$ is surjective.

The kernel of $\kappa$ consists of the automorphisms of the lattice $N$ that permute the rays of the cone $\sigma$ corresponding to the equivalent prime divisors in the class group. Therefore, $\mathrm{Ker}\,\kappa=\prod_{i=1}^s \Sigma_{\Delta_i}.$ By the Fundamental theorem on homomorphisms, we have~(\ref{eqrem}).
\end{remark}

As a corollary of Theorem~\ref{thmcompgroup}, we provide another condition equivalent to the connectedness of the automorphism group.

\begin{sled}
    \label{sledcompgroup}
    Let $X$ be a non-degenerate affine toric variety. Then the group $\mathrm{Aut}(X)$ is connected if and only if there are no non-trivial automorphisms of the group $\mathrm{Cl}(X)$ that permute the elements $[D_1],\dots,[D_r]$.
\end{sled}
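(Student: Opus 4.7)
The plan is to read this off directly from Theorem~\ref{thmcompgroup}, since the corollary is really just a translation of the isomorphism
$$\mathrm{Aut}(X)/\mathrm{Aut}(X)^0 \simeq \mathrm{Aut}(\mathrm{Cl}(X))\cap \Sigma_D$$
into the language of connectedness. Connectedness of $\mathrm{Aut}(X)$ is by definition equivalent to $\mathrm{Aut}(X)=\mathrm{Aut}(X)^0$, i.e.\ to the vanishing of the component group.

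First I would invoke Theorem~\ref{thmcompgroup} to replace the quotient $\mathrm{Aut}(X)/\mathrm{Aut}(X)^0$ by the concrete group $\mathrm{Aut}(\mathrm{Cl}(X))\cap \Sigma_D$. Connectedness of $\mathrm{Aut}(X)$ therefore becomes the statement that $\mathrm{Aut}(\mathrm{Cl}(X))\cap \Sigma_D$ is trivial. By the definition of $\Sigma_D$, an element of this intersection is a (possibly trivial) automorphism of $\mathrm{Cl}(X)$ that permutes the finite set $\{[D_1],\dots,[D_r]\}$. Thus triviality of $\mathrm{Aut}(\mathrm{Cl}(X))\cap \Sigma_D$ is exactly the statement that the only automorphism of $\mathrm{Cl}(X)$ permuting $[D_1],\dots,[D_r]$ is the identity, which is precisely condition stated in the corollary.

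In the other direction, if there exists a non-trivial automorphism $\phi\in\mathrm{Aut}(\mathrm{Cl}(X))$ permuting $[D_1],\dots,[D_r]$, then $\phi$ lies in $\mathrm{Aut}(\mathrm{Cl}(X))\cap\Sigma_D$ and is not the identity, so the component group is non-trivial and $\mathrm{Aut}(X)\neq\mathrm{Aut}(X)^0$. Hence $\mathrm{Aut}(X)$ is disconnected.

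There is essentially no obstacle here, since the non-trivial content sits in Theorem~\ref{thmcompgroup} (and, via it, in Proposition~\ref{thmAut0Kera} and Lemma~\ref{lemdiagram}). The only point that requires a brief comment is to emphasise that the identity of $\mathrm{Cl}(X)$ is counted as permuting $[D_1],\dots,[D_r]$ by the trivial permutation, so that the condition in the corollary is correctly read as triviality of $\mathrm{Aut}(\mathrm{Cl}(X))\cap\Sigma_D$ rather than as its emptiness.
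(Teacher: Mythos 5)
Your proof is correct and is exactly the argument the paper intends: the corollary is stated as an immediate consequence of Theorem~\ref{thmcompgroup}, with connectedness of $\mathrm{Aut}(X)$ equivalent to triviality of the component group $\mathrm{Aut}(X)/\mathrm{Aut}(X)^0\simeq \mathrm{Aut}(\mathrm{Cl}(X))\cap\Sigma_D$. The only point worth flagging is that the equivalence ``$\mathrm{Aut}(X)$ connected $\Leftrightarrow$ $\mathrm{Aut}(X)=\mathrm{Aut}(X)^0$'' is not purely definitional in one direction --- it uses that $\mathrm{Aut}(X)^0=\mathrm{Ker}\,\alpha$ is itself connected, which is supplied by Proposition~\ref{thmAut0Kera}.
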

\section{Affine toric surfaces} \label{sectsurf}

The aim of this section is to apply the obtained results to study the connectedness of the automorphism group of a non-degenerate affine toric surface. Recall that the automorphism groups of such surfaces were described in~\cite[Theorem 4.2]{IVZaidenberg}.

Let $X$ be a non-degenerate affine toric surface. For an appropriate choice of a basis for~$N$ we may assume that the rational polyhedral cone $\sigma^{\vee}$ corresponding to the surface has the form $\langle (1,0),(a,b)\rangle$, where $b > a \geq 0$, and $(a,b)$ is a primitive vector, see~\cite[pp.~32--33]{Fulton}. Then the dual cone $\sigma$ is generated by the vectors
$v_1=(0,1)$ and $v_2=(b,-a)$:\\

\begin{center}
\begin{tikzpicture}[x=0.75pt,y=0.75pt,yscale=-1,xscale=1]
\draw  [draw opacity=0] (200.82,0.82) -- (461.05,0.82) -- (461.05,201.5) -- (200.82,201.5) -- cycle ; \draw  [color={rgb, 255:red, 155; green, 155; blue, 155 }  ,draw opacity=0.5 ] (200.82,0.82) -- (200.82,201.5)(220.82,0.82) -- (220.82,201.5)(240.82,0.82) -- (240.82,201.5)(260.82,0.82) -- (260.82,201.5)(280.82,0.82) -- (280.82,201.5)(300.82,0.82) -- (300.82,201.5)(320.82,0.82) -- (320.82,201.5)(340.82,0.82) -- (340.82,201.5)(360.82,0.82) -- (360.82,201.5)(380.82,0.82) -- (380.82,201.5)(400.82,0.82) -- (400.82,201.5)(420.82,0.82) -- (420.82,201.5)(440.82,0.82) -- (440.82,201.5)(460.82,0.82) -- (460.82,201.5) ; \draw  [color={rgb, 255:red, 155; green, 155; blue, 155 }  ,draw opacity=0.5 ] (200.82,0.82) -- (461.05,0.82)(200.82,20.82) -- (461.05,20.82)(200.82,40.82) -- (461.05,40.82)(200.82,60.82) -- (461.05,60.82)(200.82,80.82) -- (461.05,80.82)(200.82,100.82) -- (461.05,100.82)(200.82,120.82) -- (461.05,120.82)(200.82,140.82) -- (461.05,140.82)(200.82,160.82) -- (461.05,160.82)(200.82,180.82) -- (461.05,180.82)(200.82,200.82) -- (461.05,200.82) ; \draw  [color={rgb, 255:red, 155; green, 155; blue, 155 }  ,draw opacity=0.5 ]  ; 
\draw    (220.82,140.82) -- (300.82,20.82) ;
\draw    (220.82,140.82) -- (300.82,140.82) ;
\draw    (220.82,140.82) -- (259.71,82.48) ;
\draw [shift={(260.82,80.82)}, rotate = 123.69] [color={rgb, 255:red, 0; green, 0; blue, 0 }  ][line width=0.75]    (6.56,-1.97) .. controls (4.17,-0.84) and (1.99,-0.18) .. (0,0) .. controls (1.99,0.18) and (4.17,0.84) .. (6.56,1.97)   ;
\draw    (220.82,140.82) -- (238.82,140.82) ;
\draw [shift={(240.82,140.82)}, rotate = 180] [color={rgb, 255:red, 0; green, 0; blue, 0 }  ][line width=0.75]    (6.56,-1.97) .. controls (4.17,-0.84) and (1.99,-0.18) .. (0,0) .. controls (1.99,0.18) and (4.17,0.84) .. (6.56,1.97)   ; 
\draw    (380.82,140.82) -- (380.82,20.82) ;
\draw    (380.82,140.82) -- (439.15,179.71) ;
\draw [shift={(440.82,180.82)}, rotate = 213.69] [color={rgb, 255:red, 0; green, 0; blue, 0 }  ][line width=0.75]    (6.56,-1.97) .. controls (4.17,-0.84) and (1.99,-0.18) .. (0,0) .. controls (1.99,0.18) and (4.17,0.84) .. (6.56,1.97)   ; 
\draw    (380.82,140.82) -- (380.82,122.82) ;
\draw [shift={(380.82,120.82)}, rotate = 90] [color={rgb, 255:red, 0; green, 0; blue, 0 }  ][line width=0.75]    (6.56,-1.97) .. controls (4.17,-0.84) and (1.99,-0.18) .. (0,0) .. controls (1.99,0.18) and (4.17,0.84) .. (6.56,1.97)   ;

\draw (422.82,84.22) node [anchor=north west][inner sep=0.75pt]  [font=\footnotesize]  {$\sigma $};
\draw (282.82,84.22) node [anchor=north west][inner sep=0.75pt]  [font=\footnotesize]  {$\sigma ^{\vee }$};
\draw (222.82,64.22) node [anchor=north west][inner sep=0.75pt]  [font=\footnotesize]  {$M_{\mathbb{Q}}$};
\draw (342.82,64.22) node [anchor=north west][inner sep=0.75pt]  [font=\footnotesize]  {$N_{\mathbb{Q}}$};
\draw (210.82,104.22) node [anchor=north west][inner sep=0.75pt]  [font=\tiny]  {$( a,b)$};
\draw (222.82,144.22) node [anchor=north west][inner sep=0.75pt]  [font=\tiny]  {$( 1,0)$};
\draw (352.82,124.22) node [anchor=north] [inner sep=0.75pt]  [font=\tiny]  {$v_{1} =( 0,1)$};
\draw (362.82,164.22) node [anchor=north west][inner sep=0.75pt]  [font=\tiny]  {$v_{2} =( b,-a)$};
\draw (330.82,210.22) node [anchor=north] [inner sep=0.75pt]    {Figure 2.};
\end{tikzpicture}
\end{center}

Let us find the group $\mathrm{Cl}(X)$. Let the $T$-invariant prime divisors $D_1$ and $D_2$ correspond to the vectors $v_1$ and $v_2$, respectively. Then
$$\mathrm{Cl}(X)=\langle [D_1], [D_2] \rangle=\langle D_1,D_2 \rangle/ \langle \mathrm{div}(\chi^{(1,0)}),\mathrm{div}(\chi^{(0,1)}) \rangle.$$
Moreover, $$\mathrm{div}(\chi^{(1,0)})=\langle v_1,(1,0)\rangle D_1+ \langle v_2,(1,0)\rangle D_2=bD_2,$$
$$\mathrm{div}(\chi^{(0,1)})=\langle v_1,(0,1)\rangle D_1+ \langle v_2,(0,1)\rangle D_2=D_1-aD_2.$$
Consequently, $$\mathrm{Cl}(X)\simeq \mathbb{Z}/b\mathbb{Z}, \ [D_1]=a\in \mathbb{Z}/b\mathbb{Z},\ [D_2]=1\in \mathbb{Z}/b\mathbb{Z}.$$

\begin{propos}
\label{critsurf}
Let $X$ be a non-degenerate affine toric surface corresponding to the cone~$\sigma$ defined above. Then the group $\mathrm{Aut}(X)$ is connected if and only if one of the following three conditions $$(1)\ a=1;\ (2)\ b=1;\ (3)\ a^2\not\equiv 1\ (\mathrm{mod}\ b)$$ holds.
Moreover, if $\mathrm{Aut}(X)$ is not connected, then
$$\mathrm{Aut}(X)/\mathrm{Aut}(X)^0\simeq \mathbb{Z}/2\mathbb{Z}.$$
\end{propos}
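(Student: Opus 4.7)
The plan is to reduce the whole statement to a routine arithmetic analysis inside $\mathrm{Cl}(X)\simeq \mathbb{Z}/b\mathbb{Z}$ via Corollary~\ref{sledcompgroup} and Theorem~\ref{thmcompgroup}. By the corollary, the group $\mathrm{Aut}(X)$ is connected if and only if no non-trivial automorphism of $\mathrm{Cl}(X)$ permutes the two-element list $([D_1],[D_2])=(a,1)$ in $\mathbb{Z}/b\mathbb{Z}$. Since every automorphism of $\mathbb{Z}/b\mathbb{Z}$ is multiplication by a unit $c\in(\mathbb{Z}/b\mathbb{Z})^{\times}$, I simply need to list which $c$ yield an element of $\Sigma_D$.

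First I would observe that an automorphism $\phi_c\colon x\mapsto cx$ lies in $\Sigma_D$ precisely when one of the two permutations $\tau\in S_2$ occurs: either $\tau=\mathrm{id}$, forcing $c\cdot 1=1$ and hence $\phi_c=\mathrm{id}$, or $\tau=(1\,2)$, forcing $c\cdot 1=a$ and $c\cdot a=1$, that is $c=a$ and $a^2\equiv 1\pmod b$. Thus $\Sigma_D\cap\mathrm{Aut}(\mathrm{Cl}(X))$ consists of $\mathrm{id}$ together with $\phi_a$ in the swap case, the latter contributing a non-trivial element exactly when $a\not\equiv 1\pmod b$ and $a^2\equiv 1\pmod b$ (with $b>1$).

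Next I would run through the three listed conditions. If $b=1$, then $\mathrm{Cl}(X)$ is trivial and no non-trivial automorphism exists; if $a=1$, then $[D_1]=[D_2]=1$ generates $\mathrm{Cl}(X)$, so any $\phi_c\in\Sigma_D$ must fix $1$, forcing $c=1$; and if $a^2\not\equiv 1\pmod b$, the swap case is excluded directly. So in each of the cases $(1)$--$(3)$ only $\phi_c=\mathrm{id}$ survives, and $\mathrm{Aut}(X)$ is connected. Conversely, if none of $(1),(2),(3)$ holds, then $b>1$, $a\neq 1$, and $a^2\equiv 1\pmod b$, hence $\phi_a$ is a non-trivial element of $\mathrm{Aut}(\mathrm{Cl}(X))\cap\Sigma_D$, and $\mathrm{Aut}(X)$ fails to be connected.

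For the quotient statement, in the non-connected case the preceding analysis shows $\mathrm{Aut}(\mathrm{Cl}(X))\cap\Sigma_D=\{\mathrm{id},\phi_a\}$, a group of order two. Applying~(\ref{eqthmcompgroup}) of Theorem~\ref{thmcompgroup} gives $\mathrm{Aut}(X)/\mathrm{Aut}(X)^0\simeq\mathbb{Z}/2\mathbb{Z}$. No substantial obstacle is expected; the only care I would take is to handle the boundary cases $a=0$ and $a=1$ cleanly so that the three listed conditions cover every legitimate cone of the form $\langle(1,0),(a,b)\rangle$ with $b>a\geq 0$ and $(a,b)$ primitive, and to note that $\phi_a$ is indeed non-trivial precisely because $a\neq 1$ in $\mathbb{Z}/b\mathbb{Z}$ under the standing assumption $1\leq a\leq b-1$.
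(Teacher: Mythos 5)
Your argument is correct and takes essentially the same route as the paper: both reduce connectedness via Corollary~\ref{sledcompgroup} to the arithmetic question of whether multiplication by a unit $c\in(\mathbb{Z}/b\mathbb{Z})^{\times}$ can swap $[D_1]=a$ and $[D_2]=1$, arriving at the system $a^2\equiv 1\pmod b$, $a\neq 1$, $b\neq 1$, and both obtain the component group from Theorem~\ref{thmcompgroup}. The only cosmetic difference is that you identify $\mathrm{Aut}(\mathrm{Cl}(X))\cap\Sigma_D=\{\mathrm{id},\phi_a\}$ explicitly, whereas the paper deduces $|\mathrm{Aut}(X)/\mathrm{Aut}(X)^0|=2$ from the bound $1<|\mathrm{Aut}(X)/\mathrm{Aut}(X)^0|\leq 2!$.
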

\begin{proof}
Let us prove this statement using Corollary~\ref{sledcompgroup}. If the group $\mathrm{Aut}(X)$ is not connected, then there exists a non-trivial automorphism $\phi$ of the group $\mathrm{Cl}(X)\simeq \mathbb{Z}/b\mathbb{Z}$ that permutes elements $[D_1]=a\in\mathbb{Z}/b\mathbb{Z}$ and $[D_2]=1\in\mathbb{Z}/b\mathbb{Z}$. Consequently, $b\neq 1$, otherwise the class group is trivial, and $a\neq 1$, otherwise the classes of $D_1$ and $D_2$ coincide. Then the automorphism~$\phi$ acts as follows:
\begin{equation}
    \label{eqcritsurf0}
    \phi:\ 1\mapsto a,\ a\mapsto 1.
\end{equation}
Any automorphism of the group $\mathbb{Z}/b\mathbb{Z}$ acts as a multiplication by some invertible element. Therefore, we have the system
\begin{equation}
    \label{eqcritsurf}
    \begin{cases}
    a^2\equiv1\ (\mathrm{mod}\ b),\\
    a\neq 1,\\
    b\neq 1.
\end{cases}
\end{equation}
Conversely, if conditions~(\ref{eqcritsurf}) are satisfied, then there is an automorphism of $\mathrm{Cl}(X)$ that permutes $[D_1]$ and $[D_2]$, it is defined by the rule~(\ref{eqcritsurf0}).

In the case when the automorphism group of  $X$ is not connected, we have $$1< |\mathrm{Aut}(X)/\mathrm{Aut}(X)^0|\leq 2!$$ by Theorem~\ref{thmcompgroup}. Hence, the component group consists of two elements and is isomorphic to~$\mathbb{Z}/2\mathbb{Z}$. Therefore, Proposition~\ref{critsurf} is proved.
\end{proof}
\begin{remark}
We also provide a proof of the first part of Proposition~\ref{critsurf} using Theorem~\ref{crit}. This approach may be useful in case of varieties of higher dimension.

Let us prove the first part of Proposition~\ref{critsurf} using condition (2) of Theorem~\ref{crit}. If the automorphism group of $X$ is not connected, then there exists an automorphism of the Cox ring that normalizes, but does not preserve the $\mathrm{Cl}(X)$-grading. Denote this automorphism by $\phi$. Then the image of $\phi$ under $\gamma$ is a non-trivial automorphism of the class group. Note that it follows that $b \neq 1$, otherwise the class group is trivial and does not have non-trivial automorphisms. Any automorphism of the group $\mathbb{Z}/b\mathbb{Z}$ acts as a multiplication by some invertible element in $\mathbb{Z}/b\mathbb{Z}$. Let $\gamma(\phi)$ be multiplication by a invertible element $c \in \mathbb{Z}/b\mathbb{Z}$. This implies that $c$ and $b$ are coprime and $c \neq 1$.

From~\cite{Cox95}, we have $$\mathrm{deg}(T_1) = [D_1] = a \in \mathbb{Z}/b\mathbb{Z},\ \mathrm{deg}(T_2) = [D_2] = 1 \in \mathbb{Z}/b\mathbb{Z}.$$ Consequently, $\mathrm{deg}(\phi(T_1))$ coincides with $ac$ (mod $b$) in the group $\mathbb{Z}/b\mathbb{Z}$, and $\mathrm{deg}(\phi(T_2))$ coincides with $c$. Moreover, the Jacobian of the automorphism $\phi$ is a non-zero element of the field $\mathbb{K}$, so each of the elements $\phi(T_1)$ and $\phi(T_2)$ contains a linear term in $T_1$ or $T_2$. However, if $\phi(T_i)$ contains a linear term in $T_i$, then $c$ equals to 1, this gives a contradiction. Thus,
$$\phi(T_1) = k_1T_2 + \dots, \quad \phi(T_2) = k_2T_1 + \dots,$$
for some non-zero elements $k_1, k_2$ from the field $\mathbb{K}$. Therefore,
$$\mathrm{deg}(\phi(T_1)) = c  \mathrm{deg}(T_1) = \mathrm{deg}(T_2), \quad \mathrm{deg}(\phi(T_2)) = c  \mathrm{deg}(T_2) = \mathrm{deg}(T_1).$$
Consequently, we have
$$ac \equiv 1 , (\mathrm{mod} , b), \quad c = a, \quad c \neq 1.$$
Thus, the non-connectedness of the automorphism group of $X$ implies~(\ref{eqcritsurf}).

Let us prove the converse. Suppose conditions~(\ref{eqcritsurf}) hold. Consider the automorphism $\phi \in \widetilde{\mathrm{Aut}}(R(X))$ defined by: $$\phi(T_1) = T_2, \ \phi(T_2) = T_1.$$ The automorphism $\phi$ normalizes the $\mathrm{Cl}(X)$-grading, as it permutes the homogeneous components of $R(X)$ according to multiplication by $a$ in the divisor class group. At the same time, $\phi$ does not preserve the $\mathrm{Cl}(X)$-grading, since $a \neq 1$. Therefore, ${\widetilde{\mathrm{Aut}}(R(X)) \neq \mathrm{Ker}\,\gamma}$, and condition (2) of Theorem~\ref{crit} implies that the automorphism group of $X$ is not connected.

Let us provide a proof of the first part of Proposition~\ref{critsurf} using condition (3) of Theorem~\ref{crit}. Suppose that the automorphism group of $X$ is not connected. Then there exists a linear operator $L \in \mathrm{GL}_2(\mathbb{Z})$, $L(\sigma) = \sigma$ such that ${L(v_1) = v_2}$, ${L(v_2) = v_1}$, but $[D_1] \neq [D_2]$. Note that it follows that $b \neq 1$, otherwise the class group is trivial and does not have distinct elements. We have $a \neq 1$, otherwise $[D_1] = [D_2]$. Furthermore, we observe that $$(1,0) = \frac{v_2 + av_1}{b}.$$ Thus, from the linearity of $L$, we have $$L((1,0)) = \frac{L(v_2) + aL(v_1)}{b} = \frac{(ab, 1-a^2)}{b} = \left(a, \frac{1-a^2}{b}\right).$$ Since $L \in \mathrm{GL}_2(\mathbb{Z})$, we have $\frac{1-a^2}{b} \in \mathbb{Z}$ and $a^2 \equiv 1\  (\mathrm{mod} \ b)$. Consequently, the non-connectedness of the automorphism group of $X$ implies the system~(\ref{eqcritsurf}).

Conversely, suppose that conditions~(\ref{eqcritsurf}) are satisfied. Then to prove that $\mathrm{Aut}(X)$ is not connected, it suffices to find an operator $L \in \mathrm{GL}_2(\mathbb{Z})$ satisfying condition (3) of Theorem~\ref{crit}. A required map is the operator $\widetilde{L}$ defined on the basis vectors as follows: $$\widetilde{L}: (1,0) \mapsto \left(a, \frac{1-a^2}{b}\right), \  (0,1) \mapsto (b,-a).$$ Indeed, $\widetilde{L}$ permutes the vectors $v_1$ and $v_2$, and $[D_1] \neq [D_2]$.

\end{remark}

\section{Examples} \label{sectex}
In this section we provide some examples illustrating the obtained results.

\begin{example}
\label{ex0}
The automorphism group of the affine space $\mathbb{A}^n$ is connected by~\cite[Theorem 6]{Popov}. This fact also follows from Theorem~\ref{crit}, since the affine space is a factorial toric variety, and the divisor class group of a factorial variety is trivial, see, for example,~\cite[Theorem 4.0.18 (b)]{CLS}.
\end{example}
 
\begin{example}
\label{ex1}
Consider the variety $$X_1=\mathbb{V}(xy-z^2)\subset \mathrm{Spec}(\mathbb{K}[x,y,z]).$$
The variety $X_1$ is a non-degenerate affine toric surface, corresponding to the rational polyhedral cone $\sigma_1^{\vee}$ generated by the vectors $(1,0)$ and $(1,2)$, see Figure 3. In notation of Section~\ref{sectsurf}, we have $a=1$ and $b=2$. Therefore, by Proposition~\ref{critsurf}, the automorphism group of $X_1$ is connected: $$\mathrm{Aut}(X_1)=\mathrm{Aut}(X_1)^0.$$
\end{example}

\begin{example}
\label{ex2}
Let $$X_2=\mathbb{V}(xy-z^3)\subset \mathrm{Spec}(\mathbb{K}[x,y,z]).$$
This variety is also a non-degenerate affine toric surface. It corresponds to the cone~$\sigma_2^{\vee}$ generated by the vectors $(1,0)$ and $(2,3)$, see Figure 3. By Proposition~\ref{critsurf}, the automorphism group of $X_2$ is not connected and contains exactly two connected components: $$\mathrm{Aut}(X_2)/\mathrm{Aut}(X_2)^0 \simeq \mathbb{Z}/2\mathbb{Z}.$$ In this case the $T$-invariant prime divisors are $$D_1=\{y=z=0\}\text{ and }D_2=\{x=z=0\}.$$ Note that the automorphism $$x\mapsto y,\ y \mapsto x,\ z\mapsto z$$ is not contained in the neutral component by Proposition~\ref{thmAut0Kera}.
\end{example}

\begin{example}
\label{ex3}
Consider $$X_3=\mathbb{V}(xy-z^2, wz-y^3)\subset \mathrm{Spec}(\mathbb{K}[x,y,z, w]).$$ Note that $X_3$ is a non-degenerate affine toric surface, corresponding to the cone~$\sigma_3^{\vee}$ in Figure~3, generated by the vectors $(1,0)$ and $(2,5)$. By Proposition \ref{critsurf}, the automorphism group of~$X_3$ is connected: $$\mathrm{Aut}(X_3)=\mathrm{Aut}(X_3)^0.$$
\end{example}

\tikzset{every picture/.style={line width=0.75pt}} %set default line width to 0.75pt        
\begin{center}

\begin{tikzpicture}[x=0.75pt,y=0.75pt,yscale=-1,xscale=1]
%uncomment if require: \path (0,192); %set diagram left start at 0, and has height of 192

%Shape: Grid [id:dp38580235411281816] 
\draw  [draw opacity=0] (149.82,0.82) -- (510.11,0.82) -- (510.11,161.51) -- (149.82,161.51) -- cycle ; \draw  [color={rgb, 255:red, 155; green, 155; blue, 155 }  ,draw opacity=0.5 ] (149.82,0.82) -- (149.82,161.51)(169.82,0.82) -- (169.82,161.51)(189.82,0.82) -- (189.82,161.51)(209.82,0.82) -- (209.82,161.51)(229.82,0.82) -- (229.82,161.51)(249.82,0.82) -- (249.82,161.51)(269.82,0.82) -- (269.82,161.51)(289.82,0.82) -- (289.82,161.51)(309.82,0.82) -- (309.82,161.51)(329.82,0.82) -- (329.82,161.51)(349.82,0.82) -- (349.82,161.51)(369.82,0.82) -- (369.82,161.51)(389.82,0.82) -- (389.82,161.51)(409.82,0.82) -- (409.82,161.51)(429.82,0.82) -- (429.82,161.51)(449.82,0.82) -- (449.82,161.51)(469.82,0.82) -- (469.82,161.51)(489.82,0.82) -- (489.82,161.51)(509.82,0.82) -- (509.82,161.51) ; \draw  [color={rgb, 255:red, 155; green, 155; blue, 155 }  ,draw opacity=0.5 ] (149.82,0.82) -- (510.11,0.82)(149.82,20.82) -- (510.11,20.82)(149.82,40.82) -- (510.11,40.82)(149.82,60.82) -- (510.11,60.82)(149.82,80.82) -- (510.11,80.82)(149.82,100.82) -- (510.11,100.82)(149.82,120.82) -- (510.11,120.82)(149.82,140.82) -- (510.11,140.82)(149.82,160.82) -- (510.11,160.82) ; \draw  [color={rgb, 255:red, 155; green, 155; blue, 155 }  ,draw opacity=0.5 ]  ;
%Straight Lines [id:da33098246192522396] 
\draw    (169.82,140.82) -- (229.82,20.82) ;
%Straight Lines [id:da33336426549832154] 
\draw    (169.82,140.82) -- (249.82,140.82) ;
%Straight Lines [id:da08943785309656072] 
\draw    (169.82,140.82) -- (188.92,102.61) ;
\draw [shift={(189.82,100.82)}, rotate = 116.57] [color={rgb, 255:red, 0; green, 0; blue, 0 }  ][line width=0.75]    (6.56,-1.97) .. controls (4.17,-0.84) and (1.99,-0.18) .. (0,0) .. controls (1.99,0.18) and (4.17,0.84) .. (6.56,1.97)   ;
%Straight Lines [id:da7039310892701587] 
\draw    (169.82,140.82) -- (187.82,140.82) ;
\draw [shift={(189.82,140.82)}, rotate = 180] [color={rgb, 255:red, 0; green, 0; blue, 0 }  ][line width=0.75]    (6.56,-1.97) .. controls (4.17,-0.84) and (1.99,-0.18) .. (0,0) .. controls (1.99,0.18) and (4.17,0.84) .. (6.56,1.97)   ;
%Straight Lines [id:da2786297413322847] 
\draw    (289.66,141.41) -- (369.98,20.23) ;
%Straight Lines [id:da8583365571810317] 
\draw    (289.82,140.82) -- (369.82,140.82) ;
%Straight Lines [id:da03334219213665701] 
\draw    (289.82,140.82) -- (329.04,81.31) ;
\draw [shift={(330.14,79.64)}, rotate = 123.38] [color={rgb, 255:red, 0; green, 0; blue, 0 }  ][line width=0.75]    (6.56,-1.97) .. controls (4.17,-0.84) and (1.99,-0.18) .. (0,0) .. controls (1.99,0.18) and (4.17,0.84) .. (6.56,1.97)   ;
%Straight Lines [id:da5669094631784892] 
\draw    (289.82,140.82) -- (307.82,140.82) ;
\draw [shift={(309.82,140.82)}, rotate = 180] [color={rgb, 255:red, 0; green, 0; blue, 0 }  ][line width=0.75]    (6.56,-1.97) .. controls (4.17,-0.84) and (1.99,-0.18) .. (0,0) .. controls (1.99,0.18) and (4.17,0.84) .. (6.56,1.97)   ;
%Straight Lines [id:da5970502262436677] 
\draw    (410,141) -- (458.11,20.4) ;
%Straight Lines [id:da6219866861679342] 
\draw    (410,141) -- (490,141) ;
%Straight Lines [id:da005026343590825766] 
\draw    (409.82,140.82) -- (449.08,42.68) ;
\draw [shift={(449.82,40.82)}, rotate = 111.8] [color={rgb, 255:red, 0; green, 0; blue, 0 }  ][line width=0.75]    (6.56,-1.97) .. controls (4.17,-0.84) and (1.99,-0.18) .. (0,0) .. controls (1.99,0.18) and (4.17,0.84) .. (6.56,1.97)   ;
%Straight Lines [id:da5675789188816922] 
\draw    (410,141) -- (428,141) ;
\draw [shift={(430,141)}, rotate = 180] [color={rgb, 255:red, 0; green, 0; blue, 0 }  ][line width=0.75]    (6.56,-1.97) .. controls (4.17,-0.84) and (1.99,-0.18) .. (0,0) .. controls (1.99,0.18) and (4.17,0.84) .. (6.56,1.97)   ;

% Text Node
\draw (231.82,64.22) node [anchor=north west][inner sep=0.75pt]  [font=\footnotesize]  {$\sigma _{1}^{\lor }$};
% Text Node
\draw (171.82,64.22) node [anchor=north west][inner sep=0.75pt]  [font=\footnotesize]  {$M_{\mathbb{Q}}$};
% Text Node
\draw (153.82,106.72) node [anchor=north west][inner sep=0.75pt]  [font=\tiny]  {$( 1,2)$};
% Text Node
\draw (171.82,144.22) node [anchor=north west][inner sep=0.75pt]  [font=\tiny]  {$( 1,0)$};
% Text Node
\draw (351.5,65.4) node [anchor=north west][inner sep=0.75pt]  [font=\footnotesize]  {$\sigma _{2}^{\lor }$};
% Text Node
\draw (291.5,65.4) node [anchor=north west][inner sep=0.75pt]  [font=\footnotesize]  {$M_{\mathbb{Q}}$};
% Text Node
\draw (280.82,104.22) node [anchor=north west][inner sep=0.75pt]  [font=\tiny]  {$( 2,3)$};
% Text Node
\draw (291.82,144.22) node [anchor=north west][inner sep=0.75pt]  [font=\tiny]  {$( 1,0)$};
% Text Node
\draw (471.68,65.58) node [anchor=north west][inner sep=0.75pt]  [font=\footnotesize]  {$\sigma _{3}^{\lor }$};
% Text Node
\draw (411.82,64.22) node [anchor=north west][inner sep=0.75pt]  [font=\footnotesize]  {$M_{\mathbb{Q}}$};
% Text Node
\draw (392.82,105.72) node [anchor=north west][inner sep=0.75pt]  [font=\tiny]  {$( 2,5)$};
% Text Node
\draw (411.84,144.99) node [anchor=north west][inner sep=0.75pt]  [font=\tiny]  {$( 1,0)$};
% Text Node
\draw (175.5,170.9) node [anchor=north west][inner sep=0.75pt]    {Figure 3. Cones $\sigma _{1}^{\lor } ,\ \sigma _{2}^{\lor }$ and $\sigma _{3}^{\lor }$ from examples~\ref{ex1},~\ref{ex2} and~\ref{ex3}};

\end{tikzpicture}
    
\end{center}

\begin{example}
\label{ex4}
Consider the non-degenerate non-simplicial affine toric variety $$X_4=\mathbb{V}(xy-zw)\subset \mathrm{Spec}(\mathbb{K}[x,y,z, w]).$$ It corresponds to the cone $\sigma_4^{\vee}$ in Figure 4.

The primitive vectors on the rays of the cone $\sigma_4$ dual to $\sigma_4^{\vee}$ are
$$v_1=
\begin{pmatrix} 1 \\ -1 \\ 0 \end{pmatrix},\ v_2=\begin{pmatrix} 1 \\ 0 \\ -1 \end{pmatrix},\ v_3=\begin{pmatrix} 0 \\ 1 \\ 0 \end{pmatrix},\ v_4=\begin{pmatrix} 0 \\ 0 \\ 1 \end{pmatrix}.$$ 
These vectors correspond to the $T=(\mathbb{K}^{\times})^3$-invariant prime divisors: $$D_1=\{ x=w=0\},\ D_2=\{x=z=0\},\ D_3=\{z=y=0\},\ D_4=\{y=w=0\}.$$
Let us find the divisor class group of the variety $X_4$.

The group of $T$-invariant principal divisors on $X_4$ is generated by the following Weil divisors: $$\mathrm{div}(\chi^{(0,0,1)})=\sum_{i=1}^{4}\langle v_i,(0,0,1) \rangle D_i=D_4-D_2,$$
$$\mathrm{div}(\chi^{(0,1,0)})=\sum_{i=1}^{4}\langle v_i,(0,1,0) \rangle D_i=D_3-D_1,$$
$$\mathrm{div}(\chi^{(1,0,0)})=\sum_{i=1}^{4}\langle v_i,(1,0,0) \rangle D_i=D_1+D_2.$$
Therefore,
$$\mathrm{Cl}(X_4)\simeq \langle D_1,D_2,D_3,D_4 \rangle /(D_4-D_2,D_3-D_1,D_1+D_2)\simeq \langle [D_1] \rangle \simeq \mathbb{Z},$$
where $$[D_1]=-[D_2]=[D_3]=-[D_4]=1\in \mathbb{Z}.$$ The group $\mathrm{Cl}(X_4)$ has a unique non-trivial automorphism $\phi$, which permutes the elements of the set $\{[D_1],[D_2],[D_3],[D_4]\}$. It acts as a multiplication by $-1$ in the group $\mathbb{Z}$. By Theorem~\ref{thmcompgroup}, we have $$\mathrm{Aut}(X_4)/\mathrm{Aut}(X_4)^0\simeq \mathbb{Z}/2\mathbb{Z}.$$

Note that by Proposition~\ref{thmAut0Kera} the neutral component of the automorphism group of $X_4$ contains the automorphism $$x\mapsto y,\ y\mapsto x,\ z\mapsto w,\ w\mapsto z$$ and does not contain the automorphism $$x\mapsto y,\ y\mapsto x,\ z\mapsto z,\ w\mapsto w.$$
\end{example}

\tikzset{every picture/.style={line width=0.75pt}} %set default line width to 0.75pt        

\begin{center}
\begin{tikzpicture}[x=0.75pt,y=0.75pt,yscale=-1,xscale=1]
%uncomment if require: \path (0,189); %set diagram left start at 0, and has height of 189

%Straight Lines [id:da17886696629995824] 
\draw    (240.11,123.94) -- (378.11,123.94) ;
\draw [shift={(380.11,123.94)}, rotate = 180] [color={rgb, 255:red, 0; green, 0; blue, 0 }  ][line width=0.75]    (6.56,-1.97) .. controls (4.17,-0.84) and (1.99,-0.18) .. (0,0) .. controls (1.99,0.18) and (4.17,0.84) .. (6.56,1.97)   ;
%Straight Lines [id:da7121747061099424] 
\draw    (240.11,123.94) -- (318.7,45.36) ;
\draw [shift={(320.11,43.94)}, rotate = 135] [color={rgb, 255:red, 0; green, 0; blue, 0 }  ][line width=0.75]    (6.56,-1.97) .. controls (4.17,-0.84) and (1.99,-0.18) .. (0,0) .. controls (1.99,0.18) and (4.17,0.84) .. (6.56,1.97)   ;
%Straight Lines [id:da6603661890870789] 
\draw    (240.11,123.94) -- (378.48,25.11) ;
\draw [shift={(380.11,23.94)}, rotate = 144.46] [color={rgb, 255:red, 0; green, 0; blue, 0 }  ][line width=0.75]    (6.56,-1.97) .. controls (4.17,-0.84) and (1.99,-0.18) .. (0,0) .. controls (1.99,0.18) and (4.17,0.84) .. (6.56,1.97)   ;
%Straight Lines [id:da24330960030476323] 
\draw    (320.11,43.94) -- (380.11,23.94) ;
%Straight Lines [id:da8990327630861092] 
\draw    (320.11,43.94) -- (320.11,143.94) ;
%Straight Lines [id:da7405263745609991] 
\draw    (380.11,23.94) -- (380.11,123.94) ;
%Straight Lines [id:da6510063042013137] 
\draw    (320.11,143.94) -- (380.11,123.94) ;
%Straight Lines [id:da6604638615519254] 
\draw [color={rgb, 255:red, 128; green, 128; blue, 128 }  ,draw opacity=1 ]   (240.11,123.94) -- (400.11,163.94) ;
%Straight Lines [id:da5010931219647712] 
\draw [color={rgb, 255:red, 128; green, 128; blue, 128 }  ,draw opacity=1 ]   (240.11,123.94) -- (420.11,63.94) ;
%Straight Lines [id:da5830778264963197] 
\draw [color={rgb, 255:red, 128; green, 128; blue, 128 }  ,draw opacity=1 ]   (240.11,123.94) -- (240.11,3.94) ;
%Straight Lines [id:da7452016836809381] 
\draw    (240.11,123.94) -- (318.17,143.46) ;
\draw [shift={(320.11,143.94)}, rotate = 194.04] [color={rgb, 255:red, 0; green, 0; blue, 0 }  ][line width=0.75]    (6.56,-1.97) .. controls (4.17,-0.84) and (1.99,-0.18) .. (0,0) .. controls (1.99,0.18) and (4.17,0.84) .. (6.56,1.97)   ;

% Text Node
\draw (301,149.4) node [anchor=north west][inner sep=0.75pt]  [font=\tiny]  {$( 1,0,0)$};
% Text Node
\draw (355,54.4) node [anchor=north west][inner sep=0.75pt]  [font=\footnotesize]  {$\sigma _{4}^{\lor }$};
% Text Node
\draw (378.5,128.44) node [anchor=north west][inner sep=0.75pt]  [font=\tiny]  {$( 1,1,0)$};
% Text Node
\draw (369,10.44) node [anchor=north west][inner sep=0.75pt]  [font=\tiny]  {$( 1,1,1)$};
% Text Node
\draw (293.5,30.9) node [anchor=north west][inner sep=0.75pt]  [font=\tiny]  {$( 1,0,1)$};
% Text Node
\draw (214,170.9) node [anchor=north west][inner sep=0.75pt]    {Figure 4. Cone $\sigma _{4}^{\lor }$ from example~\ref{ex4}};

\end{tikzpicture}

\end{center}

\begin{example}
    Finally, let us provide an example of an affine toric variety with a non-commutative component group of the automorphism group. Let the cone $\sigma_5$ be generated in the three-dimensional rational vector space by the vectors $$v_1=
    \begin{pmatrix}
    2 \\ 0 \\ 1
\end{pmatrix}
    ,\ v_2=\begin{pmatrix} 0 \\ 2 \\ 1 \end{pmatrix} \text{ and } v_3=\begin{pmatrix} 0 \\ 0 \\ 1 \end{pmatrix}.$$ Consider the affine toric variety $X_5$ corresponding to the cone $\sigma_5$. Here the class group is isomorphic to $\mathbb{Z}/2\mathbb{Z} \oplus \mathbb{Z}/2\mathbb{Z}$, and $$[D_1]=(1,0),\ [D_2]=(0,1),\ [D_3]=(1,1)\in \mathbb{Z}/2\mathbb{Z} \oplus \mathbb{Z}/2\mathbb{Z}.$$ The automorphism group of $\mathrm{Cl}(X_5)$ is isomorphic to the group of permutations of a set of three elements, and each automorphism leaves the set $\{[D_1],[D_2], [D_3]\}$ invariant. Therefore, by Theorem~\ref{thmcompgroup}, $$\mathrm{Aut}(X_5)/\mathrm{Aut}(X_5)^0\simeq \mathrm{Aut}(\mathrm{Cl}(X_5))\cap \Sigma_D\simeq S_3.$$ Note that in this case, the upper bound on the number of connected components of the automorphism group from Theorem~\ref{thmcompgroup} is achieved.
\end{example}

\end{document}